\DeclareRobustCommand{\stirling2}{\genfrac\{\}{0pt}{}}
\DeclareRobustCommand{\eulerian}{\genfrac\langle\rangle{0pt}{}}
\DeclareMathOperator{\tcomp}{t-comp}
\DeclareMathOperator{\op2}{op2}
\DeclareMathOperator{\qt}{qt}
\DeclareMathOperator{\qtconn}{qt-conn}
\DeclareMathOperator{\qtcomp}{qt-comp}
\DeclareMathOperator{\lt}{lt}
\DeclareMathOperator{\ltcomp}{lt-comp}
\DeclareMathOperator{\ld}{ld}
\DeclareMathOperator{\qlt}{qlt}
\DeclareMathOperator{\qltconn}{qlt-conn}
\DeclareMathOperator{\qltcomp}{qlt-comp}
\theoremstyle{plain}
\newtheorem{theorem}{Theorem}
\newtheorem{claim}[theorem]{Claim}
\theoremstyle{definition}
\newtheorem{definition}[theorem]{Definition}
\newtheorem{example}[theorem]{Example}
\theoremstyle{remark}
\newcommand{\seqnum}[1]{\href{https://oeis.org/#1}{\rm \underline{#1}}}
\begin{document}

\title{Enumerating Threshold Graphs and Some Related Graph Classes}

\author{David Galvin, Greyson Wesley and Bailee Zacovic\thanks{Department of Mathematics,
University of Notre Dame, Notre Dame IN; {\tt dgalvin1}, {\tt gwesley} and {\tt bzacovic@nd.edu}.}}

\maketitle

\begin{abstract}
We give combinatorial proofs of some enumeration formulas involving labelled threshold, quasi-threshold, loop-threshold and quasi-loop-threshold graphs. In each case we count by number of vertices and number of components. For threshold graphs, we also count by number of dominating vertices, and for loop-threshold graphs we count by number of looped dominating vertices. 

We also obtain an analog of the Frobenius formula (connecting Eulerian numbers and Stirling numbers of the second kind) in the context of labelled threshold graphs.
\end{abstract}

\section{Introduction and summary of results}

A {\it threshold graph} is a graph $G$ for which there is a real number $t$ (the threshold), and an assignment $a:V(G)\rightarrow {\mathbb R}$ of real numbers to the vertices of $G$, with the property that $uv$ is an edge of $G$ if and only if $a(u)+a(v)> t$. Equivalently, there is $s \in {\mathbb R}$ and $b:V(G)\rightarrow {\mathbb R}$ such that $I \subseteq V(G)$ is an independent set (a set spanning no edges) if and only if $\sum_{v \in I} b(v) \leq s$.

Threshold graphs were introduced by Chv\'atal \& Hammer \cite{ChvatalHammer1} in relation to a set packing problem, and were later independently discovered by various other authors in mathematics, operations research, computer science and psychology. The wide applicability of threshold graphs is due in part to the fact that they admit many alternate characterizations. Indeed, the monograph on threshold graphs by Mahadev and Peled begins by establishing eight distinct characterizations \cite[Theorem 1.2.4]{MahadevPeled} (and also gives a good history of their appearance in various fields).   

The most germane characterization for the purposes of this note is that the family of threshold graphs is
the smallest (by containment) family of (simple, finite) graphs that contains $K_1$, and is closed under adding isolated vertices and adding dominating vertices (vertices adjacent to all other vertices in the graph).

Let $t_n$ denote the number of labelled threshold graphs on $n$ vertices (almost always in this note, we will implicitly take the label set to be $[n]:=\{1,\ldots, n\}$). The beginning of the sequence $(t_n)_{n \geq 1}$ is shown in Table \ref{table-tn}, and appears in the \href{https://oeis.org}{On-Line Encyclopedia of Integer Sequences} ({\it OEIS}) \cite{OEIS}, as entry \seqnum{A005840}.

\begin{table}[ht!]
\begin{center}
\begin{tabular}{r|rrrrrrc}
$n$ & 1 & 2 & 3 & 4 & 5 & 6 & $\cdots$ \\
\hline
$t_n$ & 1 & 2 & 8 & 46 & 332 & 2874 & $\cdots$ 
\end{tabular}
\caption{The count of labelled threshold graphs on vertex set $[n]$.} \label{table-tn}
\end{center}
\end{table}

An {\it ascent} in a permutation $\pi=\pi_1\cdots\pi_n$ of $[n]$ (given here in one-line notation, as all permutations in this note will be) is an index $i \in [n-1]$ with $\pi_i<\pi_{i+1}$. The Eulerian number $\eulerian{n}{k}$ counts the number of permutations of $[n]$ with $k$ ascents (OEIS \seqnum{A008292}; but note that that entry uses an offset indexing, counting permutations with $k-1$ ascents). Beissinger and Peled \cite{BeissingerPeled} implicitly observed, via a generating function argument, the formula
\begin{equation} \label{Spiro}
t_n = \sum_{k=1}^{n-1}(n-k)\eulerian{n-1}{k-1}2^k,
\end{equation}
valid for $n \geq 2$. Spiro \cite{Spiro} recently gave a combinatorial proof of \eqref{Spiro}. In this note, we give combinatorial proofs of other enumeration formulas for threshold graphs, and for some related graph families --- quasi-threshold graphs, loop-threshold graphs, and quasi-loop-threshold graphs. 

In Sections \ref{sec-ltgr} (threshold graphs), \ref{sec-lqtgr} (quasi-threshold graphs), \ref{sec-lt-intro} (loop-threshold graphs) and \ref{sec-qlt-results} (quasi-loop-threshold graphs) we present and discuss our results, and then we give the proofs in Sections \ref{sec-thresh-proofs}, \ref{sec-quasi-thresh-proofs}, \ref{sec-loop-thresh-proofs} and \ref{sec-quasi-loop-thresh-proofs}.

\subsection{Labelled threshold graph results} \label{sec-ltgr}

We begin in Section \ref{subsec-spiros-proof} by recalling Spiro's combinatorial proof of \eqref{Spiro}, as it introduces ideas and notation that will be useful throughout the note.

Next we present combinatorial proofs of two other formulas for $t_n$ that are presented at OEIS \seqnum{A005840}: for $n \geq 1$, and taking $t_0=1$,
\begin{equation} \label{tn1}
t_n = 1-n +\sum_{k=0}^{n-1} \binom{n}{k}t_k
\end{equation}
(Section \ref{subsec-tn1-proof}), and for $n \geq 1$, and taking $t_0=t_1=1$,
\begin{equation} \label{tn2}
t_{n+1} = n(t_n-t_{n-1}) +\sum_{k=0}^n \binom{n}{k}t_kt_{n-k}.
\end{equation}
(Section \ref{subsec-tn2-proof}). 

In Section \ref{sec-thresh-comp-proofs}, 
we consider the enumeration of labelled threshold graphs by number of components. This work has already been carried out, using generating functions, by Beissinger and Peled \cite{BeissingerPeled} (and in fact they added another parameter, number of distinct vertex degrees). Here, we take a fully combinatorial approach. For $n, k \geq 1$, let $\tcomp_{n,k}$ denote the number of labelled threshold graphs on vertex set $[n]$ with $k$ components. We establish that
\begin{enumerate}[(i)]
\item $\tcomp_{1,1} = 1$,
\item for $n \geq 2$,  $\tcomp_{n,1} = \sum_{k=1}^{n-1} (n-k)\eulerian{n-1}{k-1} 2^{k-1}~\left(= \frac{t_n}{2}\right)$,
\item for $n \geq 3$ and $2 \leq k \leq n-1$, $\tcomp_{n,k} = \binom{n}{k-1} \tcomp_{n-k+1,1}$ and
\item for $n \geq 2$, $\tcomp_{n,n}=1$.
\end{enumerate}
(These all follow in a straightforward way from \eqref{Spiro}.)

The triangle $(\tcomp_{n,k}:n \geq 1, 1 \leq k \leq n)$ is shown in Table \ref{table-tcomp}, and is OEIS \seqnum{A348436}. The first column of the triangle is the sequence enumerating connected labelled threshold graphs on $n$ vertices ($n \geq 1$), and is OEIS \seqnum{A317057}. (A different sequence, OEIS \seqnum{A053525}, is listed as enumerating connected labelled threshold graphs on $n$ vertices. This sequence is identical to OEIS \seqnum{A317057} for $n \geq 2$, but at $n=1$ takes the value $0$ rather than the correct value of $1$. Also, at OEIS \seqnum{A005840} there is an incorrect conjecture concerning the number of connected labelled threshold graphs.) 

\begin{table}[ht!]
\begin{center}
\begin{tabular}{r|rrrrrrc}
$\tcomp_{n,k}$ & $k=1$ & 2 & 3 & 4 & 5 & 6 & $\cdots$\\
\hline
$n=1$ & 1 \\
2 & 1 & 1 \\
3 & 4 & 3 & 1 &  \\
4 & 23 & 16 & 6 & 1 & \\
5 & 166 & 115 & 40 & 10 & 1 \\
6 & 1437 & 996 & 345 & 80 & 15 & 1 \\
$\vdots$ & $\vdots$ & $\vdots$ & $\vdots$ & $\vdots$ & $\vdots$ & $\vdots$ & $\ddots$
\end{tabular}
\caption{The count of labelled threshold graphs on vertex set $[n]$ with $k$ components.} \label{table-tcomp}
\end{center}
\end{table}

The Stirling number of the second kind $\stirling2{n}{k}$ (OEIS \seqnum{A008277}) counts partitions of a set of size $n$ into $k$ non-empty blocks (when we say ``partition'' from here on, we always mean ``partition into non-empty blocks''). There is an identity linking Eulerian numbers and Stirling numbers of the second kind (due originally to Frobenius \cite{Frobenius}, and discussed in \cite[Chapter 17.2]{MahadevPeled}): for $n \geq 1$, 
\begin{equation} \label{frobenius}
\sum_{k=0}^{n-1} \eulerian{n}{k}x^k = \sum_{\ell=1}^n \ell!\stirling2{n}{\ell}(x-1)^{n-\ell}.
\end{equation}
We will see in Section \ref{sec-lt-intro} that this identity is directly relevant to the enumeration of loop-threshold graphs. In Section \ref{sec-Frobenius-for-threshold}, we prove an analog of \eqref{frobenius} in the setting of labelled threshold graphs. Let $\op2_{n,\ell}$ denote the number of ordered partitions of $[n]$ (also known as {\it weak orders}) into $\ell$ blocks, in which the first (i.e., smallest) block has size at least $2$.
\begin{theorem} \label{thm-thresh-frob}
For $n \geq 2$,
\begin{equation} \label{eq-thresh-frob}
\sum_{k=1}^{n-1} (n-k)\eulerian{n-1}{k-1}x^{k-1}=\sum_{\ell=1}^{n-1} \op2_{n,\ell}(x-1)^{n-\ell-1}.
\end{equation}
\end{theorem}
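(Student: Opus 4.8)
The plan is to reduce \eqref{eq-thresh-frob} to the classical Frobenius identity \eqref{frobenius} together with the standard recurrence for Eulerian numbers, so that what remains is a short polynomial manipulation. Throughout write $A_m(x):=\sum_{k=0}^{m-1}\eulerian{m}{k}x^k$ for the $m$-th Eulerian polynomial, so that $A_m(1)=m!$ and $\sum_k k\eulerian{m}{k}x^k=xA_m'(x)$. Reindexing the left-hand side of \eqref{eq-thresh-frob} by $j=k-1$ gives
\[
\sum_{k=1}^{n-1}(n-k)\eulerian{n-1}{k-1}x^{k-1}=\sum_{j=0}^{n-2}(n-1-j)\eulerian{n-1}{j}x^j=(n-1)A_{n-1}(x)-xA_{n-1}'(x).
\]

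For the right-hand side, the first step is the identity
\[
\op2_{n,\ell}=\ell!\stirling2{n}{\ell}-n\,(\ell-1)!\stirling2{n-1}{\ell-1},
\]
which holds because $\ell!\stirling2{n}{\ell}$ counts all ordered partitions of $[n]$ into $\ell$ blocks, while those with a singleton first block are enumerated by choosing that singleton ($n$ choices) and then forming an ordered partition of the remaining $n-1$ elements into $\ell-1$ blocks. Substituting this into $\sum_{\ell=1}^{n-1}\op2_{n,\ell}(x-1)^{n-\ell-1}$ and multiplying through by $x-1$, the first piece becomes $\sum_{\ell=1}^{n-1}\ell!\stirling2{n}{\ell}(x-1)^{n-\ell}$, which by \eqref{frobenius} equals $A_n(x)-n!$ once the top ($\ell=n$) term is peeled off; likewise, after reindexing, the second piece contributes $n\sum_{i=1}^{n-2}i!\stirling2{n-1}{i}(x-1)^{n-1-i}=n\bigl(A_{n-1}(x)-(n-1)!\bigr)$ by \eqref{frobenius} applied with $n-1$ in place of $n$. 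Since the $n!$ terms cancel, this shows that the right-hand side of \eqref{eq-thresh-frob} equals $\bigl(A_n(x)-nA_{n-1}(x)\bigr)/(x-1)$, a genuine polynomial.

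Comparing the two computations, \eqref{eq-thresh-frob} is equivalent to $(x-1)\bigl[(n-1)A_{n-1}(x)-xA_{n-1}'(x)\bigr]=A_n(x)-nA_{n-1}(x)$, which rearranges to $A_n(x)=\bigl(1+(n-1)x\bigr)A_{n-1}(x)+x(1-x)A_{n-1}'(x)$; this last identity follows by multiplying the Eulerian recurrence $\eulerian{n}{k}=(k+1)\eulerian{n-1}{k}+(n-k)\eulerian{n-1}{k-1}$ by $x^k$ and summing over $k$. I expect the only delicate point to be the bookkeeping of boundary terms when passing between the truncated Stirling sums above and the full sums in \eqref{frobenius} (the $\ell=0$, $\ell=n-1$ and $\ell=n$ terms), together with checking that the various divisions by $x-1$ are performed on polynomials actually divisible by it. One could alternatively aim for a bijective proof in the style of the rest of the paper, by evaluating both sides at a positive integer $x=t$ and matching suitably decorated connected threshold graphs (encoded in the manner of Spiro's proof) with decorated ordered partitions counted by $\op2_{n,\ell}$; but pinning down such a bijection seems more work than the computation sketched here, which already delivers the stated analog of \eqref{frobenius}.
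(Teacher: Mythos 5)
Your proposal is correct, and it takes a genuinely different route from the paper. The paper gives a direct bijective proof: for positive integer $x$ it identifies the right-hand side as counting ordered partitions of $[n]$ into blocks (first block of size $\geq 2$) with a coloring of all elements except the minimum of the first block and the maxima of all blocks from a palette of $x-1$ colors, and the left-hand side as counting permutations of $[n]$ starting with an ascent together with an $x$-coloring of all ascents except the first; the bijection is flattening, with blue/descent breaks recovering the partition, exactly parallel to Spiro's encoding and to the paper's proof of the classical Frobenius formula in Section \ref{sec-lt-Frob}. Your approach instead decomposes $\op2_{n,\ell}=\ell!\stirling2{n}{\ell}-n(\ell-1)!\stirling2{n-1}{\ell-1}$ by inclusion--exclusion on the first block being a singleton, applies the classical Frobenius identity \eqref{frobenius} at both $n$ and $n-1$, and closes the argument with the Eulerian recurrence $\eulerian{n}{k}=(k+1)\eulerian{n-1}{k}+(n-k)\eulerian{n-1}{k-1}$. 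I checked the polynomial bookkeeping: the left side is $(n-1)A_{n-1}(x)-xA_{n-1}'(x)$, multiplying the right side by $x-1$ yields $A_n(x)-nA_{n-1}(x)$ (the $n!$ boundary terms do cancel, and divisibility by $x-1$ is automatic since $A_n(1)=n!=n\cdot(n-1)!=nA_{n-1}(1)$), and these match precisely when $A_n(x)=(1+(n-1)x)A_{n-1}(x)+x(1-x)A_{n-1}'(x)$, which is the polynomial form of the Eulerian recurrence. The trade-off: the paper's bijection is self-contained, fits the explicitly combinatorial theme of the paper, and gives direct insight into what the coefficients $\op2_{n,\ell}$ are counting; your derivation is shorter given the classical Frobenius formula as a black box, and exposes the theorem as a first-block-size refinement of the classical identity. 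Both are valid and complete.
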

Note the parallel between \eqref{frobenius} and \eqref{eq-thresh-frob}: the $\ell!\stirling2{n}{\ell}$ on the right-hand side of \eqref{frobenius} is the number of ordered partitions of $[n]$ into $\ell$ blocks, with no restriction on the size of the first block. Note also that setting $y=x+1$ in \eqref{eq-thresh-frob} and comparing coefficients of $y^{n-\ell-1}$ gives an exact formula for $\op2_{n,\ell}$, namely
\begin{equation} \label{op2-formula}
\op2_{n,\ell} = \sum_{k=1}^{n-1} (n-k)\eulerian{n-1}{k-1}\binom{k-1}{n-\ell-1}.
\end{equation}
The triangle $(\op2_{n,\ell}:n\geq 1, 1 \leq \ell \leq n)$ is shown in Table \ref{table-op2}, and is OEIS \seqnum{A348576}. The row sums of this triangle, $0, 1, 4, 23, 166, 1437, \ldots$, 
enumerate ordered partitions of a set of size $n$ ($n \geq 1$) into (any number of) non-empty blocks, in which the first block has size at least $2$. This is OEIS \seqnum{A053525}.

\begin{table}[ht!]
\begin{center}
\begin{tabular}{r|rrrrrrc}
$\op2_{n,\ell}$ & $\ell=1$ & 2 & 3 & 4 & 5 & 6 & $\cdots$ \\
\hline
$n=1$ & 0  \\
2 & 1 & 0  \\
3 & 1 & 3 & 0  \\
4 & 1 & 10 & 12 & 0  \\
5 & 1 & 25 & 80 & 60 & 0  \\
6 & 1 & 56 & 360 & 660 & 360 & 0  \\
$\vdots$ & $\vdots$ & $\vdots$ & $\vdots$ & $\vdots$ & $\vdots$ & $\vdots$ & $\ddots$
\end{tabular}
\caption{The count of ordered partitions of $[n]$ into $\ell$ non-empty blocks, in which the first block has size at least 2.} \label{table-op2}
\end{center}
\end{table}

As discussed earlier, labelled threshold graphs can be constructed iteratively, starting from a single vertex, by successively adding isolated or dominating vertices. Although there is not a unique such iterative construction,  
the number of dominating vertices added in any such construction of $G$ is independent of the choice of construction. Let $d(G)$ denote this number (note, we do not consider the initial vertex to be dominating). In Section \ref{sec-thresh-dom-proofs} we obtain an explicit expression for $d_{n,k}$, the number of labelled threshold graphs $G$ on vertex set $[n]$ with $d(G)=k$: $d_{n,0}=1$ for all $n \geq 1$, $d_{2,1}=1$, $d_{2,k}=0$ for all $k \geq 2$, and for $n \geq 3, k \geq 1$,
\begin{equation} \label{eq-dom-thresh}
d_{n,k}  = \begin{array}{l} \sum_{\ell \geq 1} \binom{n}{k+1}\op2_{k+1,\ell}\left[(\ell-1)!\stirling2{n-k-1}{\ell-1} + \ell!\stirling2{n-k-1}{\ell}\right] + \\
\\
\sum_{\ell \geq 1} \binom{n}{k}\ell!\stirling2{k}{\ell}\left[\op2_{n-k,\ell+1} + \op2_{n-k,\ell}\right]. 
\end{array}
\end{equation}
(Note that \eqref{op2-formula} gives a way of calculating the $\op2_{\cdot,\cdot}$ terms, in terms of more fundamental counting sequences.)
The triangle $(d_{n,k}:n \geq 1, 0 \leq k \leq n-1)$ is shown in Table \ref{table-d}, and is OEIS \seqnum{A350060}.

\begin{table}[ht!]
\begin{center}
\begin{tabular}{r|rrrrrrc}
$d_{n,k}$ & $k=0$ & 1 & 2 & 3 & 4 & 5 & $\cdots$ \\
\hline 
$n=1$ & 1  \\
2 & 1 & 1  \\
3 & 1 & 6 & 1  \\
4 & 1 & 22 & 22 & 1  \\
5 & 1 & 65 & 200 & 65 & 1  \\
6 & 1 & 171 & 1265 & 1265 & 171 & 1 \\
$\vdots$ & $\vdots$ & $\vdots$ & $\vdots$ & $\vdots$ & $\vdots$ & $\vdots$ & $\ddots$
\end{tabular}
\caption{The count of labelled threshold graphs on vertex set $[n]$ in which $k$ dominating vertices are added in an iterative construction.} 
\label{table-d}
\end{center}
\end{table}

\subsection{Labelled quasi-threshold graph results} \label{sec-lqtgr}

The family of {\it quasi-threshold graphs} is the smallest family of graphs that contains $K_1$, and is closed under adding dominating vertices and taking disjoint unions; it properly contains the family of threshold graphs. Like threshold graphs, quasi-threshold graphs have many alternate characterizations. One such is that they are precisely the comparability graphs of trees; it was in this context that they were first studied, by Wolk \cite{Wolk}. Another characterization is that quasi-threshold graphs are precisely those with the property that in each induced subgraph, the size of the largest independent set equals the number of maximal cliques. As observed by Golumbic \cite{Golumbic}, it is trivial to show that such graphs are perfect, and so quasi-threshold graphs are also called {\it trivially perfect graphs}.  

Let $\qt_n$ denote the number of labelled quasi-threshold graphs on $n$ vertices. As was possibly first explicitly observed by Guruswami \cite{Guruswami}, the sequence $(\qt_n)_{n \geq 1}$ begins as is shown in Table \ref{table-qt}, and is OEIS \seqnum{A058864}. 

\begin{table}[ht!]
\begin{center}
\begin{tabular}{r|rrrrrrc}
$n$ & 1 & 2 & 3 & 4 & 5 & 6 & $\cdots$  \\
\hline
$\qt_n$ & 1 & 2 & 8 & 49 & 402 & 4144 & $\cdots$ 
\end{tabular}
\caption{The count of labelled quasi-threshold graphs on vertex set $[n]$.} \label{table-qt}
\end{center}
\end{table}

In Section \ref{sec-qtn1-proof}, we give a combinatorial proof of the following formula that appears at OEIS \seqnum{A058864}: 
\begin{equation} \label{qtn1}
\qt_n = \sum_{k=0}^n (-1)^{n-k} \stirling2{n} {k}(k+1)^{k-1}.
\end{equation}
The proof uses the method of sign-changing 
involutions (see, for example, \cite{BenjaminQuinn}). Since this method will be used again in the sequel (to prove \eqref{ltn1-conj}), we digress here to give a brief description of it.

Suppose that we have a family $({\mathcal F}_n)_{n \geq 0}$ of sets, and that we want to verify the identity $|{\mathcal F}_n| = \sum_{k=0}^n (-1)^{n-k} g(n,k)$, where the $g(n,k)$ are non-negative integers for all $n, k \geq 0$. We proceed as follows.
\begin{description}
\item[Step 1] Find a set ${\mathcal G}_n$ whose size is $\sum_{k=0}^n g(n,k)$, that naturally partitions as ${\mathcal G}_n = \cup_{k=0}^n {\mathcal G}_{n,k}$, where $|{\mathcal G}_{n,k}|=g(n,k)$.
\item[Step 2] Find an involution $\iota:{\mathcal G}_n \rightarrow {\mathcal G}_n$ (a bijection satisfying $\iota \circ \iota = {\rm identity}$) with the following properties:
\begin{itemize}
\item all the fixed points of $\iota$ lie in ${\mathcal G}_n^{\rm even} := \cup_{k:~\! n-k~{\rm even}}~ {\mathcal G}_{n,k}$, and
\item all the orbits of size $2$ of $\iota$ intersect both ${\mathcal G}_n^{\rm even}$ and ${\mathcal G}_n \setminus {\mathcal G}_n^{\rm even}$.
\end{itemize}
Note that these conditions together imply that ${\mathcal G}_n^{\rm fixed}$, the set of fixed points of $\iota$, satisfies 
$$
|{\mathcal G}_n^{\rm fixed}| = \sum_{k=0}^n (-1)^{n-k} g(n,k).
$$
\item[Step 3] Find a bijection from ${\mathcal G}_n^{\rm fixed}$ to ${\mathcal F}_n$.
\end{description}

Guruswami \cite{Guruswami} used generating functions to obtain the following explicit formula for $\qtconn_n$, the number of connected labelled quasi-threshold graphs on $n$ vertices:
\begin{equation} \label{connqtn1}
\qtconn_n = \sum_{k=1}^n (-1)^{n+k} \stirling2{n}{k}k^{k-1}.
\end{equation}
As described in Section \ref{sec-qt-comp-proofs}, the proof of \eqref{qtn1} is easily modified to give a combinatorial proof of \eqref{connqtn1}. More generally, we give a simple combinatorial proof of the following: 
\begin{claim}
The number $\qtcomp_{n,\ell}$ of labelled quasi-threshold graphs on $n$ vertices with $\ell$ components is
\begin{equation} \label{compqtn}
\qtcomp_{n,\ell} = \sum_{k=1}^n (-1)^{n-k} \stirling2{n}{k}\ell\binom{k}{\ell}k^{k-\ell-1}.
\end{equation}
\end{claim}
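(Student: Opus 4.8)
The plan is to run the sign-changing involution method above, adapting the proof of \eqref{qtn1} by carrying the number of components through as a parameter that the involution never disturbs. Write $g(n,k)=\stirling2{n}{k}\,\ell\binom{k}{\ell}k^{k-\ell-1}$, so that \eqref{compqtn} reads $\qtcomp_{n,\ell}=\sum_{k=1}^{n}(-1)^{n-k}g(n,k)$. For Step~1 I would take ${\mathcal G}_{n,k}$ to be the set of pairs $(P,F)$ with $P$ a partition of $[n]$ into $k$ blocks and $F$ a rooted forest, with exactly $\ell$ trees, whose vertex set is the set of blocks of $P$. The number of rooted forests on a $k$-element vertex set with exactly $\ell$ trees is $\ell\binom{k}{\ell}k^{k-\ell-1}$ --- a form of the generalized Cayley formula (adjoining a new vertex joined to the $\ell$ roots identifies such a forest with a tree on $k+1$ labelled vertices in which the new vertex has degree $\ell$) --- so $|{\mathcal G}_{n,k}|=g(n,k)$.

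The engine of Steps 2 and 3 is the ``cotree'' description of quasi-threshold graphs: such a graph on $[n]$ with $\ell$ components is obtained uniquely from a pair $(P_0,F_0)$ with $P_0$ a partition of $[n]$ and $F_0$ a rooted forest with $\ell$ trees on the blocks of $P_0$ \emph{in which every internal node has at least two children}, by making each block a clique and completely joining two blocks whenever one is an ancestor of the other. Now any $(P,F)\in{\mathcal G}_n$ reduces, by repeatedly merging an internal block having a single child with that child (a confluent operation that leaves $\ell$ and the induced graph unchanged), to such a reduced pair; moreover $(P,F)$ is recovered from its reduction $(P_0,F_0)$ by refining each block $B$ of $P_0$ into an ordered chain of sub-blocks, i.e.\ by choosing an ordered set partition of each $B$. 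Hence the fibre of this reduction map over a given quasi-threshold graph $G$, with cotree blocks of sizes $m_1,\dots,m_{k_0}$ (so $\sum m_i=n$), factors as a product over $i$ of the sets of ordered set partitions of an $m_i$-set, and the corresponding signed count is $\prod_i\bigl(\sum_j(-1)^{m_i-j}j!\stirling2{m_i}{j}\bigr)=\prod_i 1=1$.

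I would package this as the required involution $\iota$ on ${\mathcal G}_n$ by fixing a linear order on the cotree blocks of $G$ (say by least element) and applying, to the first such block whose chain of sub-blocks is not yet at the fixed point of the classical sign-reversing involution for $\sum_j(-1)^{m-j}j!\stirling2{m}{j}=1$, that block-level involution --- leaving $(P,F)$ fixed exactly when every cotree block is at its fixed point. Each move changes the total number of blocks by one, hence flips the parity of $n-k$; and since the block-level involution fixes only configurations with $m-j$ even, the quantities $m_i-j_i$ at a global fixed point sum to an even number, so every fixed point lies in ${\mathcal G}_n^{\rm even}$, as Step~2 of the method requires. The fixed points are thus in bijection with labelled quasi-threshold graphs on $[n]$ with $\ell$ components (map $(P,F)$ to $G$), giving \eqref{compqtn}. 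Specializing to $\ell=1$ (where $\ell\binom{k}{\ell}k^{k-\ell-1}=k^{k-1}$) recovers \eqref{connqtn1}, and summing over $\ell$, using $\sum_\ell\ell\binom{k}{\ell}k^{k-\ell-1}=(k+1)^{k-1}$, recovers \eqref{qtn1}.

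I expect the main obstacle to be making $\iota$ fully precise and checking it is a genuine fixed-point-free involution off its fixed set: one must recall (or re-derive) the classical block-level involution for $\sum_j(-1)^{m-j}j!\stirling2{m}{j}=1$ and verify that splicing it into the fibre structure as above does not, for instance, change which cotree block is ``first not yet reduced''. A smaller point needing care is the cotree characterization invoked throughout, namely that a rooted forest on a partition of $[n]$ is the cotree of a (necessarily unique) quasi-threshold graph exactly when every internal node has at least two children, together with the confluence of the reduction operation.
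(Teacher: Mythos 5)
Your top-level strategy --- carry $\ell$ as a parameter the sign-reversing involution never disturbs, and count rooted forests on $k$ labelled vertices with $\ell$ trees by the generalized Cayley formula $\ell\binom{k}{\ell}k^{k-\ell-1}$ --- is exactly what the paper does; in fact the paper's entire proof of this claim is the observation that the involution $\iota$ already built for \eqref{qtn1} only ever splits one vertex into two or contracts a vertex into its parent inside a single tree, so it never changes the number of trees in a rooted partition forest, and restricting it to the $\ell$-tree forests gives \eqref{compqtn} at once. What you add is a re-derivation of $\iota$ via the fibre structure of the reduction map $(P,F)\mapsto(P_0,F_0)$ and the per-block factorization $\prod_i\bigl(\sum_j(-1)^{m_i-j}j!\stirling2{m_i}{j}\bigr)=1$; this is a clean structural account of why the involution works, and the paper's recursive peel-off of the maximal root-to-branch chain $h(F)$ of the component containing the least label, with its split/merge cases (A) and (B), is precisely the block-by-block form of the classical sign-reversing involution you invoke, applied in the paper's traversal order rather than by least element of the cotree block. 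Either order determines a valid involution, for the reason you give: the block-level move leaves the reduction, and hence the ordering, unchanged. The precision you flag as still missing --- the block-level involution itself, confluence of the reduction, and the cotree characterization --- is all supplied in the paper's proof of \eqref{qtn1}, so once that section is in hand your argument collapses to the paper's one-paragraph observation.
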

The triangle $(\qtcomp_{n,\ell}:n \geq 1, 1 \leq \ell \leq n)$ is shown in Table \ref{table-qtcomp}, and is OEIS \seqnum{A350528}. The first column (enumerating connected labelled quasi threshold graphs, by number of vertices) is OEIS \seqnum{A058863}. 

\begin{table}[ht!]
\begin{center}
\begin{tabular}{r|rrrrrrc}
$\qtcomp_{n,\ell}$ & $\ell=1$ & 2 & 3 & 4 & 5 & 6 & $\cdots$ \\
\hline
$n=1$ & 1 \\
2 & 1 & 1 \\
3 & 4 & 3 & 1  \\
4 & 23 & 19 & 6 & 1 \\
5 & 181 & 155 & 55 & 10 & 1  \\
6 & 1812 & 1591 & 600 & 125 & 15& 1 \\
$\vdots$ & $\vdots$ & $\vdots$ & $\vdots$ & $\vdots$ & $\vdots$ & $\vdots$ & $\ddots$
\end{tabular}
\caption{The count of labelled quasi-threshold graphs on vertex set $[n]$ with $\ell$ components.}
\label{table-qtcomp}
\end{center}
\end{table}

\subsection{Labelled loop-threshold graph results} \label{sec-lt-intro}

All graphs that have been considered so far have been simple. For the last two families that we consider, loops are allowed, but not multiple edges. The two families are direct analogs of threshold and quasi-threshold graphs, in the world of looped graphs.

The family of {\it loop-threshold graphs} is the smallest family of graphs (finite, potentially with loops, but without multiple edges) that contains $K_1$ and $K_1^{\rm loop}$ (a loop on a single vertex), and is closed under adding isolated vertices and adding looped dominating vertices (looped vertices also adjacent to all other vertices in the graph).

Both loop-threshold and quasi-loop-threshold graphs (see Section \ref{sec-qlt-results}) were introduced by Cutler and Radcliffe \cite{CutRad2} as part of their study of extremal enumerative questions for graph homomorphisms. These families are natural to consider as target graphs in this context; for example, independent sets in graphs can be encoded as homomorphisms to the loop-threshold graph that is formed by adding a looped dominating vertex to an isolated vertex. Cutler and Radcliffe prove that for each $n$ and $m$ and loop-threshold graph $H$, among the $m$-edge $n$-vertex graphs that admit the most homomorphisms to $H$ there is at least one threshold graph; and if $H$ is quasi-loop-threshold, then at least one maximizer is quasi-threshold. 

Cutler and Radcliffe also observe that
loop-threshold and quasi-loop-threshold graphs have characterizations in terms of vertex neighborhoods that are exact analogs of well-known characterizations of threshold and quasi-threshold graphs. Specifically, a graph is threshold iff for each pair $x, y \in V(G)$ we have either $N(x)\setminus\{y\} \subseteq N(y)\setminus\{x\}$ or $N(y)\setminus\{x\} \subseteq N(x)\setminus\{y\}$, where $N(x)$ is the set of vertices adjacent to $x$ (see \cite{ChvatalHammer}), while it is loop-threshold iff for each pair $x, y \in V(G)$ we have either $N(x) \subseteq N(y)$ or $N(y) \subseteq N(x)$ (note $N(x)$ includes $x$ if there is a loop at $x$) (see \cite{CutRad2}). On the quasi- side, a graph is quasi-threshold iff there is a containment relation between $N(x)\setminus\{y\}$ and $N(y)\setminus\{x\}$ for each {\it adjacent} pair $x, y \in V(G)$ (see \cite{CutRad}), while it is loop-quasi-threshold iff there is a containment relation between $N(x)$ and $N(y)$ for each adjacent pair $x, y \in V(G)$ (see \cite{CutRad2}).    

Unlike for threshold graphs, little previous work has been done on enumeration of labelled loop-threshold graphs. Let $\lt_n$ denote the number of labelled loop-threshold graphs on $n$ vertices. The sequence $(\lt_n)_{n \geq 1}$ is shown in Table \ref{table-lt} (the first few terms can be found by inspection; later terms follow from our results.) 

\begin{table}[ht!]
\begin{center}
\begin{tabular}{r|rrrrrrc}
$n$ & 1 & 2 & 3 & 4 & 5 & 6 & $\cdots$ \\
\hline
$\lt_n$ & 2 & 6 & 26 & 150 & 1082 & 9366 & $\cdots$ 
\end{tabular}
\caption{The count of labelled loop-threshold graphs on vertex set $[n]$.} \label{table-lt}
\end{center}
\end{table}
  
In Section \ref{sec-lt-initial-proofs} we show that this sequence is OEIS \seqnum{A000629}, whose $n$th term counts the number of cyclically ordered partitions of $n+1$ objects. We do this by presenting a combinatorial proof of the formula 
\begin{equation} \label{eq-lt-basic}
\lt_n = 2\sum_{k=1}^n k!\stirling2{n}{k},
\end{equation}
which is given at OEIS \seqnum{A000629}. Note that the right-hand side of \eqref{eq-lt-basic} does indeed enumerate cyclically ordered partitions of $n+1$ objects. Indeed, $\sum_{k=1}^n k!\stirling2{n}{k}$ enumerates ordered partitions of $[n]$, and there is a 1-to-2 correspondence between ordered partitions of $[n]$, and cyclically ordered partitions of $[n+1]$ --- close the ordered partition into a cyclically ordered partition, and either let $n+1$ be in a block on its own between the last and first blocks of the ordered partition, or include $n+1$ in the first block. 

Setting $x=2$ in the Frobenius formula \eqref{frobenius},
we obtain from \eqref{eq-lt-basic} the following analog of \eqref{Spiro} for loop-threshold graphs:
\begin{equation} \label{ltn2-conj}
\lt_n = \sum_{k=0}^{n-1} \eulerian{n}{k}2^{k+1}.
\end{equation}
The right-hand side above appears at OEIS \seqnum{A000629}. (Actually,  \eqref{ltn2-conj} appears at OEIS \seqnum{A000629} with $2^{k+1}$ replaced by $2^k$; this discrepancy is explained by the fact that at OEIS \seqnum{A000629}, the Eulerian number $\eulerian{n}{k}$ is counting permutations of $[n]$ with $k-1$ ascents rather than with $k$.)  
In Section \ref{sec-lt-Frob} we give a short direct combinatorial proof of \eqref{ltn2-conj} (as opposed to one passing through the Frobenius formula). It is similar in spirit to Spiro's proof of \eqref{Spiro}.

Our proof of \eqref{ltn2-conj} is very similar to a proof of a related identity involving Eulerian numbers and Stirling numbers given by Bona \cite[Section 1.1.3]{Bona}, and generalizes easily to a combinatorial proof of the full Frobenius formula, that is different from the semi-combinatorial proof given in \cite{BeissingerPeled}. For completeness, we also include this extension in Section \ref{sec-lt-Frob}. 

Another formula suggested by OEIS \seqnum{A000629} is the alternating sum 
\begin{equation} \label{ltn1-conj}
\lt_n = \sum_{k=0}^n (-1)^{n-k}\stirling2{n}{k}k!2^k.
\end{equation}
The right-hand sides of \eqref{ltn2-conj} and \eqref{ltn1-conj} can be seen to be equal using the Frobenius identity \eqref{frobenius} at $x=1/2$ (and using the symmetry $\eulerian{n}{\ell}=\eulerian{n}{n-1-\ell}$ of the Eulerian numbers). In Section \ref{sec-lt-involution} we use a sign-changing involution to give a direct combinatorial proof of \eqref{ltn1-conj}.

Next, we consider the enumeration of loop-threshold graphs by number of components. Let $\ltcomp_{n,k}$ denote the number of labelled loop-threshold graphs on vertex set $[n]$ with $k$ components. In Section \ref{sec-lt-comp} we give the easy derivation that, for $n, k \geq 1$:
\begin{enumerate}[(i)]
\item $\ltcomp_{n,1} = \sum_{k=0}^{n-1} \eulerian{n}{k}2^k$ (so $\ltcomp_{n,1}=\frac{\lt_n}{2}$ for $n \geq 2$),
\item for $n \geq 3$ and $2 \leq k \leq n-1$, $\ltcomp_{n,k} = \binom{n}{k-1} \ltcomp_{n-k+1,1}$, and
\item for $n \geq 2$, $\ltcomp_{n,n}=n+1$. 
\end{enumerate}

The triangle $(\ltcomp_{n,k}:n \geq 1, 1 \leq k \leq n)$ is shown in Table \ref{table-ltcomp}, and is  OEIS \seqnum{A350531}. This is very close to OEIS \seqnum{A154921} --- in the triangle shown in Table \ref{table-ltcomp}, replace the last non-zero entry ($m+1$) of each row with two entries $(m,1)$ to get OEIS \seqnum{A154921}. The first column of the triangle, the sequence of connected labelled loop-threshold graphs by number of vertices, is, apart from an anomaly at $n=1$, OEIS \seqnum{A000670}. 

\begin{table}[ht!]
\begin{center}
\begin{tabular}{r|rrrrrrc}
$\ltcomp_{n,k}$ & $k=1$ & 2 & 3 & 4 & 5 & 6 & $\cdots$ \\
\hline
$n=1$ & 2 \\
2 & 3 & 3 \\
3 & 13 & 9 & 4 \\
4 & 75 & 52 & 18 & 5 \\
5 & 541 & 375 & 130 & 30 & 6 \\
6 & 4683 & 3246 & 1125 & 260 & 45 & 7 \\
$\vdots$ & $\vdots$ & $\vdots$ & $\vdots$ & $\vdots$ & $\vdots$ & $\vdots$ & $\ddots$
\end{tabular}
\caption{The count of labelled loop-threshold graphs on vertex set $[n]$ with $k$ components.}
\label{table-ltcomp}
\end{center}
\end{table}

Just as with threshold graphs, we can associate to any loop-threshold graph $G$ an invariant $\ld(G)$, the number of looped dominating vertices added in any iterative construction of $G$. This takes values between $0$ and $|V(G)|$, the number of vertices of $G$, and unlike the analogous invariant for threshold graphs, $\ld(G)$ can easily be measured without considering the iterative construction --- it is the number of looped vertices of $G$. Notice also that in contrast to the case of threshold graphs, we do not have to make an arbitrary decision regarding the first vertex in the construction.

Let $\ld_{n,k}$ denote the number of labelled loop-threshold graphs $G$ on vertex set $[n]$ with $\ld(G)=k$. Notice that  $\ld_{n,0}=1$ for all $n \geq 1$.  By a very similar argument to the one used to derive \eqref{eq-dom-thresh} (presented in Section \ref{sec-thresh-dom-proofs}), we can 
derive that for $n \geq 2, k \geq 1$, we have
$$
\ld_{n,k}= \binom{n}{k}\sum_{\ell \geq 1} \ell!\stirling2{k}{\ell}h(\ell,n,k).
$$
where
$$
h(\ell,n,k)=(\ell-1)!\stirling2{n-k}{\ell-1} + 2\ell!\stirling2{n-k}{\ell} + (\ell+1)!\stirling2{n-k}{\ell+1}.
$$
(We omit the proof.) The triangle $(\ld_{n,k}:n \geq 1, 0 \leq k \leq n)$ is shown in Table \ref{table-ld}, and is OEIS \seqnum{A350745}.

\begin{table}[ht!]
\begin{center}
\begin{tabular}{r|rrrrrrrc}
$\ld_{n,k}$ & $k=0$ & 1 & 2 & 3 & 4 & 5 & 6 & $\cdots$ \\
\hline
$n=1$ & 1 & 1 \\
2 & 1 & 4 & 1  \\
3 & 1 & 12 & 12 & 1 \\
4 & 1 & 32 & 84 & 32 & 1  \\
5 & 1 & 80 & 460 & 460 & 80 & 1  \\
6 & 1 & 192 & 2190 & 4600 & 2190 & 192 & 1 \\
$\vdots$ & $\vdots$ & $\vdots$ & $\vdots$ & $\vdots$ & $\vdots$ & $\vdots$ & $\vdots$ & $\ddots$
\end{tabular}
\caption{The count of labelled loop-threshold graphs on vertex set $[n]$ with $k$ looped vertices.}
\label{table-ld}
\end{center}
\end{table}

\subsection{Labelled quasi-loop-threshold graph results} \label{sec-qlt-results}

The family of {\it quasi-loop-threshold graphs} is the smallest family of graphs (finite, potentially with loops, but without multiple edges) that contains $K_1$ and $K_1^{\rm loop}$ (a loop on a single vertex), and is closed under taking disjoint unions and adding looped dominating vertices; it properly contains the family of loop-threshold graphs. (See the start of Section \ref{sec-lt-intro} for a little more on this family). As with loop-threshold graphs, little previous work has been done on enumeration of labelled quasi-loop-threshold graphs.

Let $\qlt_n$ denote the number of labelled quasi-loop-threshold graphs on $n$ vertices. The sequence $(\qlt_n)_{n \geq 1}$ is shown in Table \ref{table-qlt} (the first few terms can be found by inspection; later terms follow from our results.)

\begin{table}[ht!]
\begin{center}
\begin{tabular}{r|rrrrrrc}
$n$ & 1 & 2 & 3 & 4 & 5 & 6 & $\cdots$ \\
\hline
$\qlt_n$ & 2 & 7 & 42 & 376 & 4513 & 68090 & $\cdots$ 
\end{tabular}
\caption{The count of labelled quasi-loop-threshold graphs on vertex set $[n]$.} \label{table-qlt}
\end{center}
\end{table}

In Section \ref{sec-quasi-loop-thresh-proofs} we show that this sequence is an offset of OEIS \seqnum{A038052}, whose $n$th term counts the number of labeled trees whose vertices are labelled with the blocks of a partition of $[n]$. Specifically, we show that $\qlt_n$ is the $(n+1)$st term of \seqnum{A038052}. We do this by presenting a combinatorial proof of the formula 
\begin{equation} \label{qltn1}
\qlt_n = \sum_{k=1}^{n+1} \stirling2{n+1}{k} k^{k-2}.
\end{equation}
Note that by Cayley's formula, the right-hand side of \eqref{qltn1} (which is given at OEIS \seqnum{A038052}) is easily seen to enumerate labeled trees whose vertices are labelled with the blocks of a partition of $[n+1]$.

We conclude our note by considering the enumeration of labelled quasi-loop-threshold graphs by number of components. Here things are not as clean as in the case of quasi-threshold graphs.
In Section \ref{sec-quasi-loop-thresh-proofs} we consider the number $\qltconn_n$ of connected labelled quasi-loop-threshold graphs on $n$ vertices (this turns out to be a useful first step in obtaining \eqref{qltn1}), and show that $\qltconn_1=2$ and for $n \geq 2$  
\begin{equation} \label{qlt-conn}
\qltconn_n = \sum_{k=1}^n \stirling2{n}{k}
k^{k-1}.
\end{equation}
Except for an anomaly at $n=1$, the sequence $(\qltconn_n)_{n \geq 1}$ is OEIS \seqnum{A048802} (that sequence takes value $2$ at $n=1$, as opposed to $1$; so OEIS \seqnum{A048802} counts labelled quasi-loop-threshold on $n$ vertices that have at least one looped dominating vertex.)  

Let $\qltcomp_{n,\ell}$ denote the number of labelled quasi-loop-threshold on $n$ vertices with $\ell$ components. It follows via standard considerations that
\begin{equation} \label{qlt-comp}
\qltcomp_{n,\ell} = \frac{1}{\ell!} \sum_{x_1+\cdots +x_\ell=n,~x_i \geq 1} \binom{n}{x_1,\ldots, x_\ell} qlt^{\rm conn}_{x_1}\cdots qlt^{\rm conn}_{x_\ell}.
\end{equation}
We also have, via the exponential formula (see e.g. \cite{Wilf}), that 
$$
\qltcomp_{n,\ell}=\frac{n!}{\ell!}[x^n]\left(\sum_{k \geq 1}\frac{\qltconn_k x^k}{k!}\right)^\ell,
$$
where $[x^n]f(x)$ denotes the coefficient of $x^n$ in the power series $f(x)$. The triangle $(\qltcomp_{n,k}:n \geq 1, 1 \leq k \leq n)$ is shown in Table \ref{table-qltcomp}, and is OEIS \seqnum{A350746}.

\begin{table}[ht!]
\begin{center}
\begin{tabular}{r|rrrrrrc}
$\qltcomp_{n,\ell}$ & $\ell=1$ & 2 & 3 & 4 & 5 & 6 & $\cdots$\\
\hline
$n=1$ & 2 \\
2 & 3 & 4 \\
3 & 16 & 18 \\
4 & 133 & 155 & 72 & 16 \\
5 & 1521 & 1810 & 910 & 240 & 32 \\
6 & 22184 & 26797 & 14145 & 4180 & 720 & 64 \\
$\vdots$ & $\vdots$ & $\vdots$ & $\vdots$ & $\vdots$ & $\vdots$ & $\vdots$ & $\ddots$
\end{tabular}
\caption{The count of labelled quasi-loop-threshold on vertex set $[n]$ with $k$ components.}
\label{table-qltcomp}
\end{center}
\end{table}

\section{Proofs for threshold graph results} \label{sec-thresh-proofs}

\subsection{Spiro's proof of \eqref{Spiro}} \label{subsec-spiros-proof}

Our goal here is to establish that for $n \geq 2$ the number $t_n$ of labelled threshold graphs on $[n]$ satisfies
\begin{equation} \label{Spiro'}
t_n = \sum_{k=1}^{n-1}(n-k)\eulerian{n-1}{k-1}2^k.
\end{equation}

Threshold graphs can be constructed from $K_1$ by iteratively adding isolated or dominating vertices, and so a labelled threshold graph $G$ on vertex set $[n]$, $n \geq 2$, can be uniquely encoded by a pair $(s,P)$, where 
\begin{itemize}
\item $s \in \{+,-\}$ and 
\item $P$ is an ordered partition of $[n]$ with the first block having size at least $2$.
\end{itemize}
Indeed, if the construction of $G$ from $K_1$ begins by assigning a label to $K_1$ and then adding some (labelled) dominating vertices, we can take $s=+$ and take as the first block of $P$ the label assigned to $K_1$ together with the labels of all the added dominating vertices that are added before the first isolated vertex is added; then take the second block of $P$ to be the labels of all the isolated vertices that are added before the next dominating vertex is added, and so on. If the construction of $G$ from $K_1$ begins by assigning a label to $K_1$ and then adding some isolated vertices, we can take $s=-$ and then proceed similarly. It is easily seen that each threshold graph has a unique code, and that for each pair $(s, P)$ there is a unique threshold graph that has $(s, P)$ as its code. (See Figure \ref{fig-thresh-codes}.) 

\begin{figure}[ht!] 
\begin{center}
\begin{tikzpicture}
\node[label=below:2] at (0,0) [circle,fill,inner sep=1.5pt]{};
\node[label=below:3] at (1,0) [circle,fill,inner sep=1.5pt]{};
\node[label=below:1] at (2,.5) [circle,fill,inner sep=1.5pt]{};
\node[label=below:4] at (3,1.5) [circle,fill,inner sep=1.5pt]{};
\node[label=below:5] at (4,3) [circle,fill,inner sep=1.5pt]{};
\node[label=below:1] at (7,0) [circle,fill,inner sep=1.5pt]{};
\node[label=below:3] at (8,0) [circle,fill,inner sep=1.5pt]{};
\node[label=below:5] at (9,.5) [circle,fill,inner sep=1.5pt]{};
\node[label=below:2] at (10,1.5) [circle,fill,inner sep=1.5pt]{};
\node[label=below:4] at (11,3) [circle,fill,inner sep=1.5pt]{};
\draw (1,0) -- (0,0);
\draw (4,3) -- (0,0);
\draw (4,3) -- (1,0);
\draw (4,3) -- (2,.5);
\draw (4,3) -- (3,1.5);
\draw (10,1.5) -- (7,0);
\draw (10,1.5) -- (8,0);
\draw (10,1.5) -- (9,.5);
\draw (11,3) -- (7,0);
\draw (11,3) -- (8,0);
\draw (11,3) -- (9,.5);
\draw (11,3) -- (10,1.5);
\end{tikzpicture}
\caption{Two labelled threshold graphs. The one on the left has code $(+,23/14/5)$, while the one on the right has code $(-,135/24)$.}
\label{fig-thresh-codes}
\end{center}
\end{figure}
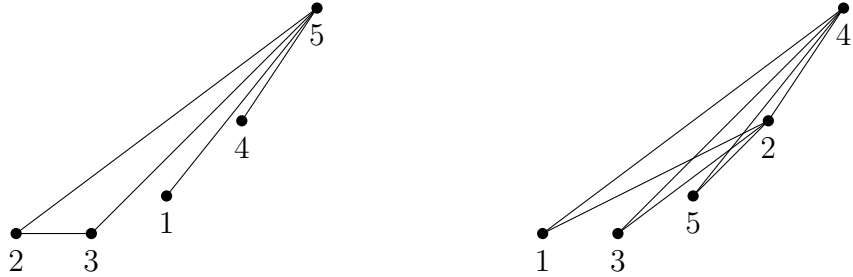

So, let ${\mathcal O}_n$ be the set of all pairs $(s,P)$ with $s\in\{+,-\}$ and with $P$ a partition of $[n]$ with first block having size at least $2$, and let ${\mathcal P}_n$ be the set of triples $(s,\pi,c)$ where 
\begin{itemize}
\item $s \in \{+,-\}$, 
\item $\pi$ is a permutation of $[n]$ that starts with an ascent, and 
\item $c$ is an assignment of a color, from a palette of two colors, say red and blue, to each of the ascents of $\pi$ other than the first ascent.
\end{itemize}

\begin{claim} \label{clm-t-eul}
$|{\mathcal O}_n|=|{\mathcal P}_n|$.
\end{claim}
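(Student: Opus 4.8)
The plan is to strip off the common factor $\{+,-\}$ from both sides and then give an explicit bijection between ordered partitions of $[n]$ whose first block has size at least $2$ and pairs $(\pi,c)$, where $\pi$ is a permutation of $[n]$ starting with an ascent and $c$ assigns one of two colors (say red or blue) to every ascent of $\pi$ other than the first. Since $|\mathcal{O}_n|$ is twice the number of such partitions and $|\mathcal{P}_n|$ is twice the number of such pairs, the Claim follows.

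The bijection is the standard ``read each block in increasing order and remember where the block boundaries fall.'' Given an ordered partition $P=B_1/B_2/\cdots/B_\ell$ with $|B_1|\geq 2$, list the elements of each block in increasing order and concatenate these lists to form a word $\pi=\pi_1\cdots\pi_n$, a permutation of $[n]$. Because each block is listed increasingly, every position $i$ with $\pi_i$ and $\pi_{i+1}$ in the same block is an ascent of $\pi$; in particular, since $|B_1|\geq 2$, position $1$ is an ascent, so $\pi$ starts with an ascent, and moreover every descent of $\pi$ occurs at a position where $\pi_i,\pi_{i+1}$ lie in different (consecutive) blocks. Now define $c$ on the ascents of $\pi$ other than the first: color an ascent at position $i\geq 2$ red if $\pi_i$ and $\pi_{i+1}$ are in the same block, and blue otherwise. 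Reattaching the sign $s$, this produces an element of $\mathcal{P}_n$.

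For the inverse, given $(\pi,c)$ with $\pi$ starting with an ascent, call a position $i\in[n-1]$ a \emph{cut} if it is a descent of $\pi$, or if it is an ascent with $i\geq 2$ colored blue; positions that are red ascents, together with position $1$, are not cuts. Reading $\pi$ from left to right and starting a new block immediately after each cut produces an ordered partition $P$ of $[n]$, and since position $1$ is not a cut we get $|B_1|\geq 2$. To see the two maps are mutually inverse, the key observation is that within any block produced by the inverse map no interior position is a cut, hence none is a descent, so each block is an increasing run of $\pi$; this is exactly the shape produced by the forward map, and the coloring is recovered because the block boundaries are precisely the descents together with the blue ascents.

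I expect the only delicate point to be the bookkeeping around position $1$: the constraint $|B_1|\geq 2$ must correspond exactly to ``$\pi$ starts with an ascent and that ascent is not a cut,'' which is why the first ascent is deliberately excluded from the domain of $c$. Once that is set up correctly, checking that the forward and backward maps compose to the identity in both directions is routine.
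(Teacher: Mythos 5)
Your proposal is correct and is essentially the same as the paper's proof: the permutation you build is exactly the flattening $\pi_f(P)$, and your coloring rule (``red if $\pi_i$ and $\pi_{i+1}$ are in the same block, blue otherwise'') is equivalent to the paper's (``red if $\pi_i$ is not the largest element of its block''), since blocks are read in increasing order. The only cosmetic difference is that you factor out the sign $s$ at the start rather than carrying it along through the bijection.
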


\medskip

Before proving Claim \ref{clm-t-eul} we complete the combinatorial proof of \eqref{Spiro} by observing that the right-hand side of \eqref{Spiro'} enumerates ${\mathcal P}_n$. Indeed, there are $(n-k)\eulerian{n-1}{k-1}$ permutations of $[n]$ that have $k$ ascents and that start with an ascent (this is established in \cite[Lemma 6]{Spiro}, and is also observed by Gessel at OEIS \seqnum{A008292}). For each such permutation, there are $2^{k-1}$ ways to $2$-color the ascents (other than the first ascent); the extra factor of $2$ in the summand on the right-hand side of \eqref{Spiro'} allows for the selection of $s$.

\medskip

A natural operation on ordered partitions comes up in the proof of Claim \ref{clm-t-eul}, and will recur throughout other proofs, so we record it as a definition.
\begin{definition} \label{def-flat}
For an ordered partition $P=P_1/\cdots/P_k$ with, for each $i$,  $P_i=\{p_{i1},\ldots,p_{i{\ell_i}}\}$ ($p_{i1} <\cdots <p_{i{\ell_i}}$), the {\em flattening} of $P$ is the permutation 
$$
\pi_f(P) = p_{11}\cdots p_{1{\ell_1}} \cdots p_{k1}\cdots p_{k{\ell_k}}
$$
of $\cup_{i=1}^k P_i$.
\end{definition}
For example, for $P=57/321/64$ (an ordered partition of $[7]$), we have $\pi_f(P) = 5712346$.

\begin{proof}[Proof (of Claim \ref{clm-t-eul})]
We describe a bijection from ${\mathcal O}_n$ to ${\mathcal P}_n$. Given a pair $(s,P) \in {\mathcal O}_n$,
set $\pi=\pi_f(P)$ (see Definition \ref{def-flat}). Because the first block of $P$ has size at least $2$, $\pi$ starts with an ascent. Color an ascent (other than this initial one) red if it is not the largest element of its block, and blue otherwise (note that ascents colored blue are necessarily the largest elements in their blocks). The triple $(s,\pi,c)$ is in ${\mathcal P}_n$. 
\begin{example} \label{ex-ascent}
The pair $(+,235/17/46/8) \in {\mathcal O}_n$ maps to the triple
$$
(+,2\textcolor{red}{3}5\textcolor{red}{1}7\textcolor{red}{4}\textcolor{blue}{6}8,c) \in {\mathcal P}_n,
$$ 
where $c^{-1}({\rm red})=\{3,1,4\}$ and $c^{-1}({\rm blue})=\{6\}$.
\end{example}

To see that the map described above from ${\mathcal O}_n$ to ${\mathcal P}_n$ is a bijection, we now describe how to invert it. Given a triple $(s,\pi,c) \in {\mathcal P}_n$, form an ordered partition $P$ of $[n]$ as follows: write $\pi$ in one-line notation, make a break in $\pi$ after each ascent colored blue, and after each descent. Let $P_1$ (the first block of $P$) be the set of symbols that occur before the first break, $P_2$ (the second block of $P$) be the set of symbols that occur between the first and second breaks, et cetera. Then the pair $(s,P)$ is in ${\mathcal O}_n$,
and is easily seen to map to $(s,\pi,c)$ under the map described in paragraph preceding Example \ref{ex-ascent}.
\end{proof}

\subsection{Proof of \eqref{tn1}} \label{subsec-tn1-proof}

The goal here is to establish that the number $t_n$ of labelled threshold graphs on vertex set $[n]$ satisfies
$$
t_n = 1-n +\sum_{k=0}^{n-1} \binom{n}{k}t_k
$$
for $n \geq 1$, with $t_0=1$.

The identity is evident for $n = 1, 2$, so we consider $n \geq 3$, for which the identity is equivalent to
\begin{equation} \label{tn1'}
t_n = 2+\sum_{k=2}^{n-1} \binom{n}{k}t_k.
\end{equation}
We use that $t_n=|{\mathcal O}_n|$ for $n \geq 2$, where ${\mathcal O}_n$ is the set of pairs $(s,P)$ with $s \in \{+,-\}$ and with $P$ an ordered partition of $[n]$ in which the first block has at least two elements (see Section \ref{subsec-spiros-proof}). This equality fails for $n \leq 1$, which explains the need here (and later) to deal with small $n$ instances of the recurrence by hand rather than via a general bijection. 

There is one ordered partition of $[n]$ with only one block, and this partition appears in two elements of ${\mathcal O}_n$ (one with $s=+$ and one with $s=-$). This accounts for the $2$ on the right-hand side of \eqref{tn1'}. 

For all other ordered partitions, the union of all the blocks but the final one has size ranging from $2$ to $n-1$; let this size be $k$ (this is the index of summation in the sum on the right-hand side of \eqref{tn1'}). The number of pairs $(s,P)$ corresponding to each $k$ is $\binom{n}{k}$ (select the set $L$ of labels that do not appear in the final block) times $t_k$ (select a pair $(s,P')$, where $s \in \{+,-\}$ and $P'$ is an ordered partition of $L$ in which the first block has at least two elements); note that each $(s,P')$ gives rise to a unique element of ${\mathcal O}_n$ by appending $[n]\setminus L$ as a block at the end of $ P'$. This establishes \eqref{tn1'}.  

\subsection{Proof of \eqref{tn2}} \label{subsec-tn2-proof}

The goal here is to establish that the number $t_n$ of labelled threshold graphs on vertex set $[n]$ satisfies
$$
t_{n+1} = n(t_n-t_{n-1}) +\sum_{k=0}^n \binom{n}{k}t_kt_{n-k}
$$
for $n \geq 1$, with $t_0=t_1=1$. The $k=0$ term in the sum
is $t_n$ and the $k=1$ term is $nt_{n-1}$, so the recurrence can
be rewritten as
\begin{equation} \label{tn2'}
t_{n+1}=(n+1)t_n + \sum_{k=2}^n \binom{n}{k}t_kt_{n-k}.
\end{equation}
This identity is easy to verify for $n=1,2$, so from here on we assume $n \geq 3$. As in Section \ref{subsec-tn1-proof}, we use that $t_n=|{\mathcal O}_n|$ for $n \geq 2$, where ${\mathcal O}_n$ is the set of pairs $(s,P)$ with $s \in \{+,-\}$ and with $P$ an ordered partition of $[n]$ in which the first block has at least two elements. To verify \eqref{tn2'} we will show that 
\begin{enumerate}[(a)]
\item $(n+1)t_n$ enumerates those pairs in ${\mathcal O}_{n+1}$ in which the partition $P$ ends with a block of size $1$, and  
\item the summation on the right-hand side of \eqref{tn2'} enumerates those pairs in which $P$ ends with a block of size at least $2$. 
\end{enumerate}
For (a), here is a bijection from pairs $(s,P)$ in ${\mathcal O}_{n+1}$ in which the partition $P$ ends with a block of size $1$, to triples $(a,s',P')$ where $a \in [n+1]$, $s' \in \{+,-\}$ and $P'$ is an ordered partition of $[n]$ in which the first block has at least two elements (note that the number of such triples is $(n+1)t_n$): given $(s,P)$, let $a$ be the number in the final block of $P$, let $s'=s$, and let $P''$ be obtained from $P$ by removing the final block. Here $P''$ is an ordered partition not of $[n]$ but of a set $S$ of $n$ integers; turn it into $P'$, an ordered partition of $[n]$ (whose first block still has size at least $2$) by replacing the $i$th largest element of $S$ by $i$, for $i=1, \ldots, n$. (We refer to this final operation, which we will need again later, as {\it compressing} $S$.) This map is easily seen to be invertible. 

\begin{example} \label{ex-tn1}
The pair $(+,137/6/25/4)$ maps to the triple $(4,+,136/5/24)$.
\end{example}
 
For (b): to an ordered partition $P$ of a set of integers, associate the permutation $\pi=\pi_f(P)$ (see Definition \ref{def-flat}). For $k=2, \ldots, n-2, \widehat{n-1}, n$ let ${\mathcal O}^k_{n+1}$ be the set of  pairs in ${\mathcal O}_{n+1}$ in which the last block of $P$ has size at least $2$, and in which $n+1$ appears in position $k+1$ in $\pi(P)$ (here $\widehat{\cdot}$ is indicating a missing element in a sequence). For $k=n-1$, let ${\mathcal O}^k_{n+1}~(={\mathcal O}^{n-1}_{n+1})$ be the set of pairs in ${\mathcal O}_{n+1}$ in which the last block of $P$ has size at least $2$, and in which $n+1$ appears in position $2$ in $\pi(P)$. We claim that
\begin{enumerate}[(i)]
\item $\cup_{k=2}^n {\mathcal O}^k_{n+1}$ is exactly the set of pairs in ${\mathcal O}_{n+1}$ in which the last block of $P$ has size at least $2$, and
\item for each $k=2, \ldots, n$, $|{\mathcal O}^k_{n+1}|=\binom{n}{k}t_kt_{n-k}$.
\end{enumerate}
Noting that the union in (i) is clearly a disjoint union, (i) and (ii) together yield (b), and this together with (a) yields \eqref{tn2'}. 

To see (i), note that there are no pairs in ${\mathcal O}_{n+1}$ in which the last block of $P$ has size at least $2$, and in which $n+1$ appears in position $1$ in $\pi(P)$ --- the first block of $P$ has at least two elements (by definition of ${\mathcal O}_{n+1}$), and $\pi(P)$ begins with the smallest element of the first block, which will not be $n+1$. Nor are there pairs in ${\mathcal O}_{n+1}$ in which the last block of $P$ has size at least $2$, and in which $n+1$ appears in position $n$ in $\pi(P)$ --- this would put $n+1$ in the last block of $P$, and so by construction of $\pi(P)$ would put it in position $n+1$. So the possible positions for $n+1$ are $2$ (counted by ${\mathcal O}^{n-1}_{n+1}$) and $3, \ldots, \widehat{n}, n+1$ (counted by ${\mathcal O}^k_{n+1}$ for $k=2, \ldots, \widehat{n-1}, n$).

To see (ii), we first consider ${\mathcal O}^k_{n+1}$ for $k \neq n-1, n$. To show $|{\mathcal O}^k_{n+1}|=\binom{n}{k}t_kt_{n-k}$ we associate to each $(s,P) \in {\mathcal O}^k_{n+1}$ a quintuple $(A,s',P',s'',P'')$ where $A$ is a subset of $[n]$ of size $k$, $(s',P') \in {\mathcal O}_k$ and $(s'',P'') \in {\mathcal O}_{n-k}$, in such a way that all such quintuples are seen once as $(s,P)$ varies over ${\mathcal O}^k_{n+1}$ (note that the number of such quintuples is $\binom{n}{k}t_kt_{n-k}$). We do this as follows. Given $(s,P) \in {\mathcal O}^k_{n+1}$,
\begin{itemize}
\item take $A$ to be the elements that precede $n+1$ in $\pi(P)$; note $|A|=k$.
\item Take $s'=s$.
\item To obtain $P'$, take all the blocks of $P$ up to and including the one that includes $n+1$. Let this block (i.e., the one containing $n+1$) be $f(P)$. Remove $n+1$ from $f(P)$ (or, if $f(P)$ is a singleton, remove the entire block). The result is an ordered partition of the set $A$, whose first block has size at least $2$; turn it into $P'$, an ordered partition of $[k]$ whose first block has size at least $2$, by compressing $A$ (see just before Example \ref{ex-tn1}).  
\item To obtain $P''$, take all the blocks of $P$ that occur after $f(P)$, in reverse order (i.e., starting with the final block of $P$, and ending with the block that comes immediately after $f(P)$). The result is an ordered partition of a set $T$ of integers of size $n-k$, whose first block has size at least $2$ (since the last block of $P$ has size at least $2$); turn this into $P''$, an ordered partition of $[n-k]$ whose first block has size at least $2$, by compressing $T$. 
\item Finally, we describe $s''$, whose function is to record whether $f(P)$ (recall, this is the block of $P$ that contains $n+1$) is a singleton or not; specifically, take $s''=+$ if $f(P)=\{n+1\}$ and $s''=-$ otherwise. 
\end{itemize}
\begin{example} \label{ex-tn2}
The pair $(-,37/129/6/4/58) \in {\mathcal O}^4_9$ maps to the quintuple 
$$
(\{1,2,3,7\},-,34/12,-,24/1/3),
$$ 
while $(-,37/12/9/6/4/58)$ maps to the quintuple
$$
(\{1,2,3,7\},-,34/12,+,24/1/3).
$$ 
\end{example}

Given a quintuple $(A,s',P',s'',P'')$ with $A$ a subset of $[n]$ of size $k$, $(s',P') \in {\mathcal O}_k$ and $(s'',P'') \in {\mathcal O}_{n-k}$, we can easily invert the above association to find the unique $(s,P) \in {\mathcal O}^k_n$ that gets associated with $(A,s',P',s'',P'')$; so $|{\mathcal O}^k_n|=\binom{n}{k}t_kt_{n-k}$. 

Next we deal with $k=n$, which is a little simpler than $k=2, \ldots, n-2$. We need to establish $|{\mathcal O}^n_{n+1}|=t_n~(=|{\mathcal O}_n|)$. Given $(s,P) \in {\mathcal O}^n_{n+1}$, in which (by definition of ${\mathcal O}^n_{n+1}$) $n+1$ is in the final block of $P$, simply remove $n+1$ to obtain $P'$, an ordered partition of $[n]$ in which the first block has size at least $2$. The correspondence $(s,P)\rightarrow(s,P')$ is clearly a bijection from ${\mathcal O}^n_{n+1}$ to ${\mathcal O}_n$. 

Finally we turn to the case $k=n-1$. We aim to show $|{\mathcal O}^{n-1}_{n+1}|=nt_{n-1}$. For $(s, P) \in {\mathcal O}^{n-1}_{n+1}$, note that since $n+1$ is the second element of $\pi(P)$ it must be that $P$ begins with a block of size $2$ that includes $n+1$ and one other number, say $a$. Associate with $(s,P)$ the triple $(a, s, P')$ where $P'$ is obtained from $P$ by removing the first block, then ordering the blocks in reverse order (so $P'$ starts with a block of size at least $2$) and then compressing $[n+1]\setminus\{a\}$. 
\begin{example} \label{ex-tn3}
The pair $(+,49/238/7/156) \in {\mathcal O}_9^7$ maps to $(4,+,145/6/237)$.
\end{example}
This association is clearly a bijection from ${\mathcal O}^{n-1}_{n+1}$ to $[n] \times {\mathcal O}_{n-1}$, completing the proof of \eqref{tn2'}.

\subsection{Labelled threshold graphs by number of components} \label{sec-thresh-comp-proofs}

For $n, k \geq 1$, recall that we let $\tcomp_{n,k}$ be the number of labelled threshold graphs on vertex set $[n]$ with $k$ components. The goal of this section is to explain the (quite straightforward) derivations from \eqref{Spiro} of the following formulas: 
\begin{enumerate}[(i)]
\item $\tcomp_{1,1} = 1$,
\item for $n \geq 2$,  $\tcomp_{n,1} = \sum_{k=1}^{n-1} (n-k)\eulerian{n-1}{k-1}2^{k-1}$,
\item for $n \geq 3$ and $2 \leq k \leq n-1$, $\tcomp_{n,k} = \binom{n}{k-1} \tcomp_{n-k+1,1}$ and
\item for $n \geq 2$, $\tcomp_{n,n}=1$.
\end{enumerate}

Evidently the only connected labelled threshold graph on one vertex is connected, and for $n \geq 2$ the only connected labelled threshold graph with $n$ components is the edgeless graph, so (i) and (iv) hold. 

For $n \geq 2$, we claim that half of all labelled threshold graphs on $[n]$ are connected. Indeed, the involution $\iota$ on ${\mathcal O}_n$ given by $\iota(+,P)=(-,P)$ and $\iota(-,P)=(+,P)$ has orbits that consist of one connected labelled threshold graph on $[n]$ ($(+,P)$ if $P$ has an odd number of blocks --- and so the construction of a threshold graph from $(+,P)$ ends by adding dominating vertices --- and $(-,P)$ if $P$ has an even number of blocks), and one graph that is not connected. Together with \eqref{Spiro}, this observation yields (ii). (Note that the involution $\iota$ interpreted in terms of threshold graphs corresponds to graph complementation.)

For $n \geq 3$ and $2 \leq k \leq n-1$, the set of labelled threshold graphs on vertex set $[n]$ with $k$ components is obtained by choosing $k-1$ elements from $[n]$, choosing a connected labelled threshold graph on the remaining $n-k+1$ elements, and then adding the chosen $k-1$ elements as isolated vertices (i.e., as a block added to the end of $P$); this gives (iii). 

\subsection{Proof of Theorem \ref{thm-thresh-frob}} \label{sec-Frobenius-for-threshold}

The goal here is to establish the following analog of the Frobenius formula \eqref{frobenius}: for $n \geq 2$,
\begin{equation} \label{eq-thresh-frob'}
\sum_{k=1}^{n-1} (n-k)\eulerian{n-1}{k-1}x^{k-1}=\sum_{\ell=1}^{n-1} \op2_{n,\ell}(x-1)^{n-\ell-1},
\end{equation}
where $\op2_{n,\ell}$ is the number of ordered partitions of $[n]$ into $\ell$ blocks, with the first block having size at least $2$.

For positive integers $x$, the right-hand side of \eqref{eq-thresh-frob} counts ordered partitions of $[n]$ into blocks, in which the first block has size at least $2$, and in which all elements are colored with one of $x-1$ colors, except for the smallest element in the first block and the largest element in each of the blocks (none of which is given a color).

To each such colored partition, associate a colored permutation as follows. The permutation is obtained from the ordered partition by flattening (see Definition \ref{def-flat}). Each element that has been colored retains its color; additionally, the entries in the permutation that correspond to the largest entries in a block, and that are also ascents, are given an $x$th color. 

The process just described yields a bijective correspondence from colored partitions to permutations of $[n]$ that start with an ascent and in which all ascents except the first are given one of $x$ colors. Indeed, given such a colored permutation, we can recover the colored partition it came from by locating the ascents colored $x$ and the (uncolored) descents --- these are the largest elements of each of the blocks of the partition --- and ignoring the color $x$. 

Since (see Section \ref{subsec-spiros-proof}) the number of permutations of $[n]$ that start with an ascent and have $k$ ascents is $(n-k)\eulerian{n-1}{k-1}$, we have bijectively established \eqref{eq-thresh-frob'} for all positive integers $x$; the identity follows for all $x$ since both sides of \eqref{eq-thresh-frob'} are polynomials in $x$.

\subsection{Proof of \eqref{eq-dom-thresh}} \label{sec-thresh-dom-proofs}

Recall that for a labelled threshold graph $G$, we let $d(G)$ denote the number of dominating vertices added in any iterative  construction of $G$ that starts from a single vertex and successively adds dominating or isolated vertices, and we let $d_{n,k}$ be the number of labelled threshold graphs $G$ on vertex set $[n]$ for which $d(G)=k$. The goal of this section is to establish $d_{n,0}=1$ for all $n \geq 1$, $d_{2,1}=1$, and for $n \geq 3, k \geq 1$,
\begin{equation} \label{eq-dom-thresh'}
d_{n,k}  = \begin{array}{l} \sum_{\ell \geq 1} \binom{n}{k+1}\op2_{k+1,\ell}\left[(\ell-1)!\stirling2{n-k-1}{\ell-1} + \ell!\stirling2{n-k-1}{\ell}\right] + \\
\\
\sum_{\ell \geq 1} \binom{n}{k}\ell!\stirling2{k}{\ell}\left[\op2_{n-k,\ell+1} + \op2_{n-k,\ell}\right]. 
\end{array}
\end{equation}

The boundary conditions ($d_{n,0}=1$ for all $n \geq 1$ and $d_{2,1}=1$) are clear. For the recurrence: 
identifying a labelled threshold graph with an element of ${\mathcal O}_n$ --- the set of pairs $(s,P)$ with $s \in \{+,-\}$ and with $P$ an ordered partition of $[n]$ with the first block having size at least $2$ (see Section \ref{subsec-spiros-proof}) --- we can read off $d(G)$ from $(s,P)$ as follows. If $s=-$, then $d(G)$ is the number of elements in the even-numbered blocks of $P$ (the second, fourth, et cetera), while if $s=+$ it is one less than the number of elements in the odd-numbered blocks of $P$ (the ``one less'' because we do not consider the initial vertex in the iterative construction to be dominating).  

It follows that there are four templates for a labelled threshold graph on vertex set $[n]$ with $k$ dominating vertices in any iterative construction of $G$. 
\begin{enumerate}[(i)]
\item $(+,P)$, with an odd number of blocks in $P$, and with $k+1$ elements in odd-numbered blocks. To enumerate these pairs, we have to choose:
\begin{itemize}
\item $\ell \geq 1$, the number of odd-numbered blocks;
\item the elements in the union of those blocks --- $\binom{n}{k+1}$ options; and
\item the ordered block decomposition among these $\ell$ blocks, noting that the first block must have at least two elements --- $\op2_{k+1,\ell}$ options;
\item the ordered block decomposition of the $\ell-1$ blocks that get alternately interspersed among the $\ell$ already chosen blocks, noting that there are no restrictions on the block sizes --- $(\ell-1)!\stirling2{n-k-1}{\ell-1}$ options.  
\end{itemize}
So the number of pairs following this template is
$$
\sum_{\ell \geq 1} \binom{n}{k+1}\op2_{k+1,\ell}(\ell-1)!\stirling2{n-k-1}{\ell-1}.
$$
\item $(+,P)$, with an even number of blocks in $P$, with $k+1$ elements in odd-numbered blocks. By similar reasoning to case (i) above, the number of pairs following this template is
$$
\sum_{\ell \geq 1} \binom{n}{k+1}\op2_{k+1,\ell}\ell!\stirling2{n-k-1}{\ell}.
$$
\item $(-,P)$, with an odd number of blocks in $P$, with $k$ elements in even-numbered blocks. The number of pairs following this template is
$$
\sum_{\ell \geq 1} \binom{n}{k}\ell!\stirling2{k}{\ell}\op2_{n-k,\ell+1}.
$$
\item $(-,P)$, with an even number of blocks in $P$, with $k$ elements in even-numbered blocks. The enumeration of pairs following this template is
$$
\sum_{\ell \geq 1} \binom{n}{k}\ell!\stirling2{k}{\ell}\op2_{n-k,\ell}.
$$
\end{enumerate}
Combining (i) through (iv), \eqref{eq-dom-thresh'} follows.

\section{Proofs for quasi-threshold graph results} \label{sec-quasi-thresh-proofs}

\subsection{Proof of \eqref{qtn1}} \label{sec-qtn1-proof}

The goal of this section is to obtain a combinatorial proof of the identity
\begin{equation} \label{qtn1'}
\qt_n = \sum_{k=0}^n (-1)^{n-k} \stirling2{n}{k}(k+1)^{k-1},
\end{equation}
where $\qt_n$ is the number of labelled quasi-threshold graphs on $[n]$. The proof will be via a sign-changing involution; see the discussion just after \eqref{qtn1}.

Say that a rooted forest $F$ is a {\it rooted partition forest} on a finite set $A \subseteq {\mathbb N}$ if the vertices of $F$ are the blocks of a partition of $A$ (as usual, into non-empty blocks). Let ${\mathcal F}_A$ be the set of rooted partition forests on $A$. Let ${\mathcal F}^{\mathcal O}_A$ be the set of forests $F$ in ${\mathcal F}_A$ with $|A|-k$ odd, where $k$ is the number of vertices of $F$, and let ${\mathcal F}^{\mathcal E}_A$ be the set of those with $|A|-k$ even.

Write ${\mathcal F}_n$ as shorthand for ${\mathcal F}_{[n]}$. Since the number of rooted forests on $k$ labelled vertices is $(k+1)^{k-1}$ (this is essentially Cayley's formula), we have
\begin{equation} \label{eq-invol}
\begin{array}{rcl}
|{\mathcal F}_n| & = & \sum_{k=0}^n \stirling2{n}{k}(k+1)^{k-1}~\mbox{and} \\
|{\mathcal F}^{\mathcal E}_n|-|{\mathcal F}^{\mathcal O}_n| & = & \sum_{k=0}^n (-1)^{n-k}\stirling2{n}{k}(k+1)^{k-1}.
\end{array}
\end{equation}

We will prove \eqref{qtn1'} in two steps.
\begin{enumerate}[(a)]
\item First, we will identify a subset ${\mathcal Q}_n$ of ${\mathcal F}_n$ that is equinumerous with the set of labelled quasi-threshold graphs on $[n]$. All $F \in {\mathcal Q}_n$ will have $n$ vertices (so, the underlying partition of $[n]$ that forms the vertex set of $F$ will be the partition into $n$ singleton blocks), and so be in ${\mathcal F}^{\mathcal E}_n$.
\item Then we will describe an involution $\iota$ on ${\mathcal F}_n$ with the following properties:
\begin{enumerate}[(i)]
\item if $\{F,F'\}$ is an orbit of size two of $\iota$, then one of $F, F'$ is in ${\mathcal F}^{\mathcal O}_n$ and the other is in ${\mathcal F}^{\mathcal E}_n$, and
\item the set of fixed points of $\iota$ is ${\mathcal Q}_n$.
\end{enumerate}
\end{enumerate}
Via \eqref{eq-invol}, this establishes \eqref{qtn1'}.

We start with (a). We will first encode labelled quasi-threshold graphs on $[n]$ by rooted partition forests on $[n]$, using an encoding implicitly given in \cite[Section 2.1]{NikolopouosPapadopolous}. We then modify the rooted partition forests on $[n]$ that occur in this encoding to create the set ${\mathcal Q}_n$. The initial encoding of labelled quasi-threshold graphs by rooted partition forests is obtained inductively as follows.
\begin{itemize}
\item
If a labelled quasi-threshold graph $G$ on $[n]$ has $\ell$ components, then the associated rooted forest $F(G)$ has $\ell$ components, one corresponding to each component of $G$; and the component of $F(G)$ corresponding to a particular component $C$ is a rooted partition tree on the vertex set of $C$.
\item
If a component $C$ of $G$ is a singleton, then the associated rooted partition tree is a single vertex; more generally, if $C$ is a complete graph, then the associated rooted partition tree is a single vertex, whose label is the vertex set of $C$.
\item
If on the other hand $C$ is not a singleton and is not complete, then it has a collection $D$ of dominating vertices, and the graph obtained from $C$, by deleting $D$, has more than one component. These components, say $C_1, \ldots, C_\ell$, inductively have associated rooted partition trees, $T(C_1), \ldots, T(C_\ell)$, say. The rooted partition tree associated with $C$ is obtained from $T(C_1), \ldots, T(C_\ell)$ by adding a root with label $D$, and an edge from $D$ to the root of $T(C_i)$ for each $i$.
\end{itemize}
(See Figures \ref{fig-rpt1} and \ref{fig-rpt2}.)
\begin{figure}[ht!] 
\begin{center}
\begin{tikzpicture}[scale=0.93]
\node[label=below:5] at (0,0) [circle,fill,inner sep=1.5pt]{};
\node[label=below:1] at (1,0) [circle,fill,inner sep=1.5pt]{};
\node[label=below:9] at (2,.5) [circle,fill,inner sep=1.5pt]{};
\node[label=below:13] at (3,1.5) [circle,fill,inner sep=1.5pt]{};
\node[label=below:10] at (4,3) [circle,fill,inner sep=1.5pt]{};
\node[label=above:17] at (4,4) [circle,fill,inner sep=1.5pt]{};
\node[label=above:16] at (3,4) [circle,fill,inner sep=1.5pt]{};
\node[label=below:7] at (5,-1) [circle,fill,inner sep=1.5pt]{};
\node[label=below:15] at (6,-1) [circle,fill,inner sep=1.5pt]{};
\node[label=below:11] at (7,-.5) [circle,fill,inner sep=1.5pt]{};
\node[label=below right:14] at (8,.5) [circle,fill,inner sep=1.5pt]{}; 
\node[label=below:3] at (9,2) [circle,fill,inner sep=1.5pt]{};
\node[label=below:4] at (10,0) [circle,fill,inner sep=1.5pt]{};
\node[label=below:2] at (11,0) [circle,fill,inner sep=1.5pt]{};
\node[label=below:6] at (12,.5) [circle,fill,inner sep=1.5pt]{};
\node[label=above:8] at (13,4) [circle,fill,inner sep=1.5pt]{};
\node[label=above:12] at (14,4) [circle,fill,inner sep=1.5pt]{};
\draw (4,4) -- (3,4);
\draw (4,4) -- (0,0);
\draw (4,4) -- (1,0);
\draw (4,4) -- (2,.5);
\draw (4,4) -- (3,1.5);
\draw (4,4) -- (4,3);
\draw (3,4) -- (0,0);
\draw (3,4) -- (1,0);
\draw (3,4) -- (2,.5);
\draw (3,4) -- (3,1.5);
\draw (3,4) -- (4,3);
\draw (1,0) -- (0,0);
\draw (4,3) -- (0,0);
\draw (4,3) -- (1,0);
\draw (4,3) -- (2,.5);
\draw (4,3) -- (3,1.5);
\draw (8,.5) -- (5,-1);
\draw (8,.5) -- (6,-1);
\draw (8,.5) -- (7,-.5);
\draw (9,2) -- (5,-1);
\draw (9,2) -- (6,-1);
\draw (9,2) -- (7,-.5);
\draw (9,2) -- (8,.5);
\draw (10,0) -- (11,0);
\draw (13,4) -- (5,-1);
\draw (13,4) -- (6,-1);
\draw (13,4) -- (7,-.5);
\draw (13,4) -- (8,.5);
\draw (13,4) -- (9,2);
\draw (13,4) -- (10,0);
\draw (13,4) -- (11,0);
\draw (13,4) -- (12,.5);
\draw (14,4) -- (5,-1);
\draw (14,4) -- (6,-1);
\draw (14,4) -- (7,-.5);
\draw (14,4) -- (8,.5);
\draw (14,4) -- (9,2);
\draw (14,4) -- (10,0);
\draw (14,4) -- (11,0);
\draw (14,4) -- (12,.5);
\draw (13,4) -- (14,4);
\end{tikzpicture}
\end{center}
\caption{A quasi-threshold graph. The component on the left is a threshold graph with code $(+,5~1/9~13/10~16~17)$, and the component on the right has two dominating vertices (8 and 12) joined to the union of two threshold graphs; one with code $(-,7~15~11/3~14)$ and the other with code $(+,2~4/6)$.}
\label{fig-rpt1}
\end{figure}
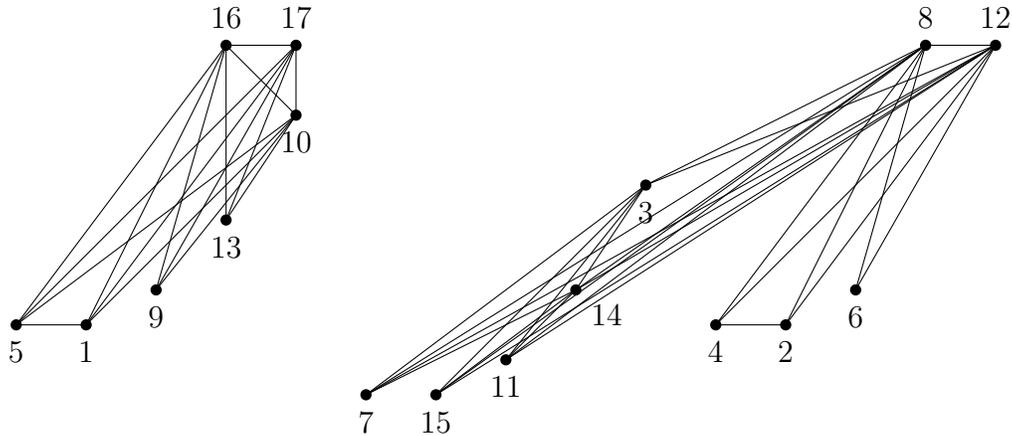

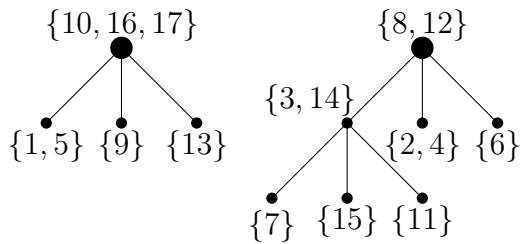
\begin{figure}[ht!]
\begin{center}
\begin{tikzpicture}
\node at (0,0) [circle,fill,inner sep=3pt]{};
\node at (0,.3) {$\{10,16,17\}$};
\node at (-1,-1) [circle,fill,inner sep=1.5pt]{};
\node at (-1,-1.3) {$\{1,5\}$};
\node at (0,-1) [circle,fill,inner sep=1.5pt]{};
\node at (0,-1.3) {$\{9\}$};
\node at (1,-1) [circle,fill,inner sep=1.5pt]{};
\node at (1,-1.3) {$\{13\}$};
\draw (0,0) -- (-1,-1);
\draw (0,0) -- (0,-1);
\draw (0,0) -- (1,-1);
\node at (4,0) [circle,fill,inner sep=3pt]{};
\node at (4,.3) {$\{8,12\}$};
\node at (3,-1) [circle,fill,inner sep=1.5pt]{};
\node at (2.5,-.7) {$\{3,14\}$};
\node at (4,-1) [circle,fill,inner sep=1.5pt]{};
\node at (4,-1.3) {$\{2,4\}$};
\node at (5,-1) [circle,fill,inner sep=1.5pt]{};
\node at (5,-1.3) {$\{6\}$};
\draw (4,0) -- (3,-1);
\draw (4,0) -- (4,-1);
\draw (4,0) -- (5,-1);
\node at (2,-2) [circle,fill,inner sep=1.5pt]{};
\node at (2,-2.37) {$\{7\}$};
\node at (3,-2) [circle,fill,inner sep=1.5pt]{};
\node at (3,-2.3) {$\{15\}$};
\node at (4,-2) [circle,fill,inner sep=1.5pt]{};
\node at (4,-2.3) {$\{11\}$};
\draw (3,-1) -- (2,-2);
\draw (3,-1) -- (3,-2);
\draw (3,-1) -- (4,-2);
\end{tikzpicture}
\end{center}
\caption{The rooted partition forest associated with the quasi-threshold graph in Figure \ref{fig-rpt1}.}
\label{fig-rpt2}
\end{figure}

The set of rooted partition forests on $[n]$ that arise in this encoding, as we vary over labelled quasi-threshold graphs $G$ on $[n]$, is precisely the set of those in which all non-leaf vertices have degree at least $2$ (these are sometimes referred to as {\it phylogenetic forests}). These forests will typically have some vertices whose labels are blocks of size 2 or greater; for the purposes of obtaining a sign-changing involution (step (b)) it is helpful to make a minor modification, so that all the blocks are singletons.  

Specifically, in each rooted partition forest on $[n]$ that arises in the encoding, we replace each vertex with a path; if the vertex has label $C$, the path has $|C|$ vertices, and the labels on the vertices of the path are precisely the elements of $C$, arranged so that they are encountered in increasing order as we move away from the root (of whatever component we happen to be in). (See Figure \ref{fig-rpt3}.)   

\begin{figure}[ht!]
\begin{center}
\begin{tikzpicture}
\node[label=above:10] at (0,0) [circle,fill,inner sep=3pt]{}; 
\node[label=right:16] at (0,-1) [circle,fill,inner sep=1.5pt]{}; 
\node[label=above right:17] at (0,-2) [circle,fill,inner sep=1.5pt]{}; 
\node[label=above left:1] at (-1,-3) [circle,fill,inner sep=1.5pt]{}; 
\node[label=below:9] at (0,-3) [circle,fill,inner sep=1.5pt]{}; 
\node[label=below:13] at (1,-3) [circle,fill,inner sep=1.5pt]{}; 
\node[label=below:5] at (-1,-4) [circle,fill,inner sep=1.5pt]{}; 
\node[label=above:8] at (5,0) [circle,fill,inner sep=3pt]{}; 
\node[label=above right:12] at (5,-1) [circle,fill,inner sep=1.5pt]{}; 
\node[label=above:3] at (4,-2) [circle,fill,inner sep=1.5pt]{}; 
\node[label=above right:14] at (4,-3) [circle,fill,inner sep=1.5pt]{}; 
\node[label=above right:2] at (5,-2) [circle,fill,inner sep=1.5pt]{}; 
\node[label=below:4] at (5,-3) [circle,fill,inner sep=1.5pt]{}; 
\node[label=below:6] at (6,-2) [circle,fill,inner sep=1.5pt]{};
\node[label=below:7] at (3,-4) [circle,fill,inner sep=1.5pt]{}; 
\node[label=below:11] at (4,-4) [circle,fill,inner sep=1.5pt]{}; 
\node[label=below:15] at (5,-4) [circle,fill,inner sep=1.5pt]{}; 
\draw[dashed] (0,0) -- (0,-1);
\draw[dashed] (0,-1) -- (0,-2);
\draw (0,-2) -- (-1,-3);
\draw (0,-2) -- (0,-3);
\draw (0,-2) -- (1,-3);
\draw (-1,-3) -- (-1,-4);
\draw (5,0) -- (5,-1);
\draw (5,-1) -- (4,-2);
\draw (5,-1) -- (5,-2);
\draw (5,-1) -- (6,-2);
\draw (4,-2) -- (4,-3);
\draw (5,-2) -- (5,-3);
\draw (4,-3) -- (3,-4);
\draw (4,-3) -- (4,-4);
\draw (4,-3) -- (5,-4);
\end{tikzpicture}
\end{center}
\caption{The modification of the rooted partition forest from Figure \ref{fig-rpt2} to form a quasi-threshold forest $F$. For typographic convenience we have not put the labels in braces here. The dashed edges indicate $h(F)$.}
\label{fig-rpt3}
\end{figure}
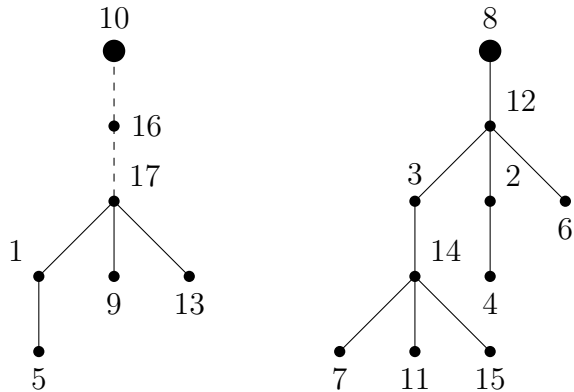

We refer to the forests produced by this modification as {\it quasi-threshold forests}, and let ${\mathcal Q}_n$ be the set of all such forests; note that $|{\mathcal Q}_n|=\qt_n$, and that each $F \in {\mathcal Q}_n$ has $n$ vertices, and so as promised is in ${\mathcal F}^{\mathcal E}_n$. 

Next we turn to (b). We describe an involution $\iota$ defined on ${\mathcal F}_n$ that fixes ${\mathcal Q}_n$, and that changes the parity of the number of vertices for each rooted partition forest that is not in ${\mathcal Q}_n$.

We describe the involution recursively. Let $F$, a rooted partition forest on $A$, be given, where $A$ is a finite subset of the natural numbers. (Our interest is in $A=[n]$, but for the purposes of describing the involution recursively, it is more convenient to generalize slightly). The construction of $\iota$ involves examining a particular subgraph $h(F)$ of $F$, and, depending on the nature of that subgraph, either making a modification to $F$ to produce $\iota(F)$, or deleting the subgraph and recursing.  

We begin by describing the process by which $h(F)$ it identified.
\begin{itemize}
\item 
Let $F'$ be the component of $F$ that contains among its labels the minimum element of $A$.
\item
If $F'$ consists of a single vertex, then we take $h(F)$ to be $F'$. If $F'$ has more than one vertex, then let $a$ be the root of $F'$, and let $b$ be the first vertex of $F'$, encountered while travelling away from $a$, that is either a leaf or has more than one child (note that $b$ may be the root itself). Equivalently, $b$ is the last vertex on the longest path in $F'$ that starts at $a$ and that, other than $a$ and $b$, only uses vertices of degree $2$. Let $h(F)$ be this path from $a$ to $b$.
\end{itemize}
(See Figure \ref{fig-rpt3}.)

We can now describe the process that produces $\iota(F)$. It will have two parts: if $h(F)$ has all singleton labels that occur in increasing order (consistent with $F$ being a quasi-threshold forest), then we recurse; otherwise, we immediately make a modification to produce $\iota(F)$. In the description that follows, we treat this second situation first.  
\begin{itemize}
\item If it is not the case that $h(F)$ has all singleton labels that occur in increasing order, then let $v$ be the first vertex along the path from $a$ to $b$ that witnesses this failure. There are two (mutually exclusive) possibilities for $v$:
\begin{enumerate}[(A)]
\item The label of $v$ is not a singleton, and either
\begin{itemize}
\item $v=a$, or
\item the path from $a$ up to (but not including) $v$ has all singleton labels that occur in increasing order, and at least one of the elements in the label of $v$ is larger than the label at the parent of $v$.
\end{itemize} 
In this case, replace $v$ with two vertices, $v'$ and $v''$, with $v''$ a child of $v'$, $v'$ a child of the parent of $v$ (if $v$ has a parent; otherwise, $v'$ becomes a root), and $v''$ the parent of all the children of $v$. Set the label of $v'$ to be the largest element in the label of $v$, and set the label of $v''$ to be all other elements in the label of $v$. Let the resulting rooted partition forest on $A$ be $\iota(F)$.
\item $v\neq a$, the path from $a$ up to (but not including) $v$ has all singleton labels that occur in increasing order, and all the elements in the label of $v$ (which now may or may not be a singleton) are smaller than the unique element in the label of the parent of $v$. In this case, contract (in the usual graph-theoretic sense) along the edge joining $v$ and its parent, and set the label of the newly created vertex $\widetilde{v}$ to be the union of the labels of $v$ and its parent. Let the resulting rooted partition forest on $A$ be $\iota(F)$.
\end{enumerate}
(See Figure \ref{fig-AB1}.)

\begin{figure}[ht!] 
\begin{center}
\begin{tikzpicture}
\node at (0,0) [circle,fill,inner sep=1.5pt]{};
\node at (0,.3) {$a=\{4\}$};
\node at (0,-1) [circle,fill,inner sep=1.5pt]{};
\node at (0.5,-1) {$\{6\}$};
\node at (0,-2) [circle,fill,inner sep=1.5pt]{};
\node at (1.5,-2) {$v=\{2,5,7,9\}$};
\draw (0,0) -- (0,-1);
\draw (0,-1) -- (0,-2);
\draw[dashed] (0,-2) -- (-.7,-2.7);
\node at (7,0) [circle,fill,inner sep=1.5pt]{};
\node at (7,.3) {$a=\{4\}$};
\node at (7,-1) [circle,fill,inner sep=1.5pt]{};
\node at (7.5,-1) {$\{6\}$};
\node at (7,-2) [circle,fill,inner sep=1.5pt]{};
\node at (8,-2) {$v'=\{9\}$};
\node at (7,-3) [circle,fill,inner sep=1.5pt]{};
\node at (8.4,-3) {$v''=\{2,5,7\}$};
\draw (7,0) -- (7,-1);
\draw (7,-1) -- (7,-2);
\draw (7,-2) -- (7,-3);
\draw[dashed] (7,-3) -- (6.3,-3.7);
\draw[dashed,thick,<->] (2,-1) -- (6,-1);
\node at (4,-.8) {$\iota$};
\end{tikzpicture}
\end{center}
\caption{The vertex $v$ in the graph on the left is of type (A), and the graph on the right shows the result after modifying at $v$. The vertex $v'$ in the graph on the right is of type (B), and the graph on the left shows the result after modifying at $v'$.}
\label{fig-AB1}
\end{figure}
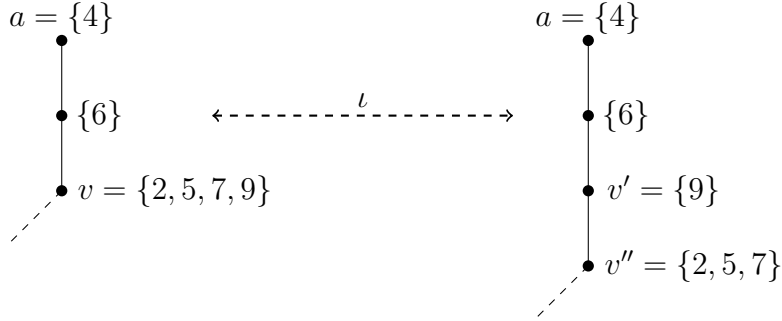

\item If $h(F)$ has all singleton labels that occur in increasing order, then delete $h(F)$ from $F$. We now perform one of two steps: 
\begin{enumerate}[(1)]
\item If the result is the empty graph, then the process stops, and $\iota(F)=F$
\item Otherwise, the resulting graph is a rooted partition forest $\widetilde{F}$ on a proper non-empty subset $B$ of $A$ (the children of $b$, if any, become roots of their components). In this case, on $\widetilde{F}$ we run the algorithm currently being described, beginning with identifying $h(\widetilde{F})$. This will (eventually, recursively) produce a rooted partition forest $\iota(\widetilde{F})$ on $B$. We turn this into a rooted partition forest on $A$ by restoring $h(F)$ (with $a$ as root), and joining $b$ to all the roots of the components in $\iota(\widetilde{F})$ that correspond to components in $F'-h(F)$ (from the construction of $\iota$ there is a natural one-to-one correspondence between these two sets of components). Set $\iota(F)$ to be the rooted partition forest on $[n]$ thus produced from $\widetilde{F}$.
\end{enumerate}
\end{itemize}

Having defined $\iota$, we now make the observations necessary to show that when $A=[n]$, $\iota$ satisfies all conditions required by (b) (which recall are that $\iota$ is an involution on ${\mathcal F}_n$, that if $\{F,F'\}$ is an orbit of size two of $\iota$ then one of $F, F'$ is in ${\mathcal F}^{\mathcal O}_n$ and the other is in ${\mathcal F}^{\mathcal E}_n$, and that the set of fixed points of $\iota$ is ${\mathcal Q}_n$).

First, note that if $F$ is a quasi-threshold forest on $[n]$ then by construction we have $\iota(F)=F$, and conversely if $F$ is not a quasi-threshold forest on $[n]$ then that failure will be witnessed at some vertex, and so $\iota(F) \neq F$. So the set of fixed points of $\iota$ is certainly ${\mathcal Q}_n$ 

Second, note that if $\iota(F) \neq F$, then the number of vertices in $F$ and $F'$ have different parities (when $\iota$ makes a change to a rooted partition forest, it either adds or subtracts a vertex).

So all that remains in the verification of (b) (and thus \eqref{qtn1'}) is to argue that $\iota$ is an involution, which we now do.

\begin{claim}
$\iota$ is an involution on ${\mathcal F}_n$.
\end{claim}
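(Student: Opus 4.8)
The plan is to prove that $\iota$ is an involution by strong induction on $|A|$, where (as in the definition) $F$ ranges over rooted partition forests on an arbitrary finite set $A\subseteq{\mathbb N}$, and $\iota$ is defined recursively on such forests. First I would record that $\iota$ really does map ${\mathcal F}_A$ to itself: splitting a vertex (move (A)), contracting an edge to the parent (move (B)), and deleting-then-restoring $h(F)$ each turn a rooted partition forest on $A$ into another one, and none of these operations ever moves an element of $A$ out of the component in which it started; hence the partition of $A$ into component vertex-sets is an invariant of $\iota$. Call $h(F)$ \emph{good} if all of its labels are singletons and they occur in increasing order travelling away from the root. The base case $|A|=1$ is immediate: $F$ is a single vertex, a quasi-threshold forest, and $\iota(F)=F$.

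For the inductive step, suppose first that $h(F)$ is not good and that its first offending vertex $v$ is of type (A); let $F_1=\iota(F)$ be obtained by splitting $v$ into $v'$, carrying the largest element $m$ of the label of $v$, above $v''$, carrying the rest. I claim $h(F_1)$ is exactly $h(F)$ with $v$ replaced by the two-vertex path $v'\to v''$: the component of $F_1$ meeting $\min A$ is the (enlarged) component of $v$, no new branching has been introduced along the edited path, and $b$ (or, when $v=b$, the vertex $v''$) retains its role as the terminal vertex. The prefix of $h(F_1)$ up to and including $v'$ is good, being the old good prefix extended by the singleton $\{m\}$, and this extension preserves the increasing order precisely because the type (A) condition forces $m$ to exceed the label of the parent of $v$ (vacuously so if $v=a$). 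Hence $v''$ is the first offending vertex of $h(F_1)$, and it is of type (B): its parent $v'$ carries the singleton $\{m\}$ and every element of the label of $v''$ is smaller than $m$. Move (B) at $v''$ contracts $v'v''$ and restamps the label $\{m\}\cup(\text{rest})=\text{label of }v$, returning $F$; thus $\iota(F_1)=F$.

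The type (B) case is symmetric. If the first offending vertex $v$ of $h(F)$ is of type (B), let $F_1=\iota(F)$ contract $v$ into its parent $u$, whose label is a singleton $\{y\}$ with $y$ larger than every element of the label of $v$ and (since $u$ lies in the good prefix, when $u\ne a$) larger than the label of the parent of $u$. Then $h(F_1)$ is $h(F)$ with $u$ and $v$ merged into a vertex $\widetilde v$ whose label has at least two elements and whose maximum is $y$; so $\widetilde v$ is offending, and is of type (A) rather than (B) because $y$ exceeds the label of its parent --- or because $\widetilde v$ is a root, if $u=a$. Move (A) at $\widetilde v$ splits $\{y\}$ off as the parent and leaves the label of $v$ below it, recovering the edge $uv$, so $\iota(F_1)=F$. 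Finally, suppose $h(F)$ is good. If deleting $h(F)$ empties the forest, then $F=h(F)$ is a single increasing path, hence a quasi-threshold forest, $\iota(F)=F$, and $\iota(\iota(F))=F$. Otherwise let $\widetilde F=F-h(F)$ (nonempty, on $B\subsetneq A$), so that $\iota(F)$ is $\iota(\widetilde F)$ with $h(F)$ reattached at the top. One checks that $h(\iota(F))=h(F)$ (the component meeting $\min A$ has the same root $a$, and $b$ has the same number of children --- now the roots of the $\iota$-images of the subtrees formerly hanging from $b$) and that deleting $h(F)$ from $\iota(F)$ returns exactly $\iota(\widetilde F)$. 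Hence $\iota(\iota(F))$ deletes $h(F)$, applies $\iota$ to $\iota(\widetilde F)$ --- which returns $\widetilde F$ by the inductive hypothesis --- and restores $h(F)$, yielding $F$.

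I expect the main obstacle to be the book-keeping that confirms, in each of the three scenarios above, that the vertex at which the reverse move must be applied is genuinely the \emph{first} offending vertex of $h$ of the relevant forest and has the complementary type; this is where one must carry along the ``good prefix'' of $h$ and handle the degenerate boundary cases $v=a$ (type (A)) and $u=a$ (type (B)), in which the vertex in question becomes a root and the parent-comparison conditions must be read vacuously. A secondary, more routine point is the stability of the identification of $h$ under the moves --- that the component meeting $\min A$ is unchanged and that $b$ (or its replacement) still terminates the path --- which reduces to the observation that none of the three operations alters the number of children of any vertex lying outside the edited portion of $h(F)$.
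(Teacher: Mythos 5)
Your proof is correct and follows the same core idea as the paper's: moves (A) and (B) undo one another at the same first offending vertex of $h$. You are considerably more thorough than the paper, however --- the paper asserts directly that the first offending vertex of $\iota(F)$ is $v''$ (resp.\ $\widetilde{v}$) and of the complementary type, and does not explicitly address the recursive branch at all; your explicit strong induction on $|A|$ together with the careful tracking of the good prefix and the boundary cases $v=a$, $u=a$ fills in exactly the book-keeping that the paper leaves to the reader, and the proof remains sound at every step.
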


\begin{proof}
Suppose first that $\iota(F)$ is obtained from $F$ by an application of the modification described in (A). Whether or not $v=a$, in $\iota(F)$ the first vertex that witnesses the failure of $\iota(F)$ to be a rooted partition forest is $v''$, and $\iota(\iota(F))$ is obtained from $\iota(F)$ by an application of the modification described in (B); this undoes the change that turned $F$ into $\iota(F)$, so in this case $\iota(\iota(F))=F$.

Suppose on the other hand that $\iota(F)$ is obtained from $F$ by an application of the modification described in (B). Then in $\iota(F)$ the first vertex that witnesses the failure of $\iota(F)$ to be a rooted partition forest is $\widetilde{v}$, and $\iota(\iota(F))$ is obtained from $\iota(F)$ by an application of the modification described in (A); this undoes the change that turned $F$ into $\iota(F)$, so in this case also $\iota(\iota(F))=F$.
\end{proof}

\subsection{Proof of \eqref{compqtn}} \label{sec-qt-comp-proofs}

The goal of this section is to establish that 
\begin{equation} \label{compqtn'}
\qtcomp_{n,\ell} = \sum_{k=1}^n (-1)^{n-k} \stirling2{n}{k}\ell\binom{k}{\ell}k^{k-\ell-1},
\end{equation}
where $\qtcomp_{n,\ell}$ is the number of labelled quasi-threshold graphs on $n$ vertices with $\ell$ components (generalizing \eqref{connqtn1}, the case $\ell=1$). 

This follows immediately from the involution given in Section \ref{sec-qtn1-proof}. Indeed, let $f(k,\ell)$ denote the number of rooted forests on $k$ vertices with $\ell$ components; so $f(k,1)=k^{k-1}$, and
$$
f(k,\ell) = \ell\binom{k}{\ell}k^{k-\ell-1}
$$
(see e.g. \cite[Chapter 33]{AignerZiegler}). It follows that $\sum_{k=1}^n \stirling2{n}{k}\ell\binom{k}{\ell}k^{k-\ell-1}$ enumerates rooted partition forests on $[n]$ that have $k$ components. Restricting $\iota$ to this set, and noting that $\iota$ does not change the number of components of a rooted partition forest, the discussion in Section \ref{sec-qtn1-proof} shows that the right-hand side of \eqref{compqtn'} counts quasi-threshold forests on $[n]$ with $\ell$ components. By construction, the set of such forests is equinumerous with the set of labelled quasi-threshold graphs on $[n]$ with $\ell$ components.    

\section{Proofs for loop-threshold graph results} \label{sec-loop-thresh-proofs}

\subsection{Proof of \eqref{eq-lt-basic}} \label{sec-lt-initial-proofs}

The goal of this section is to give a combinatorial proof of the formula 
\begin{equation} \label{eq-lt-basic'}
\lt_n = 2\sum_{k=1}^n k!\stirling2{n}{k},
\end{equation}
where $\lt_n$ is the number of labelled loop-threshold graphs on $[n]$.

Recall that in Section \ref{sec-thresh-proofs} we observed that a labelled threshold graph on $[n]$ can be encoded by a pair $(s, P)$ where $s \in \{+,-\}$ and $P$ is an ordered partition of $[n]$ with the first block having size at least $2$.

By almost the same process, a labelled loop-threshold graph on $[n]$ can be encoded by a pair $(s, P)$ where $s \in \{+,-\}$ and $P$ is an ordered partition of $[n]$, with no restriction on the size of the first block. We now describe this encoding.

Recall that a loop-threshold graph $G$ can be constructed by either starting with an unlooped or a looped vertex, and then iteratively adding isolated or looped dominating vertices. If the construction of $G$ begins with a looped vertex, then take $s=+$, and take as the first block of $P$ the label assigned to the initial loop, together with the labels of all the added looped dominating vertices that are added before the first isolated vertex is added; then take the second block of $P$ to be the labels of all the isolated vertices that are added before the next looped dominating vertex is added, and so on. If the construction of $G$ begins with an isolated vertex, take $s=-$ and then proceed similarly. It is easily seen that each loop-threshold graph has a unique code, and that for each pair $(s, P)$ there is a unique loop-threshold graph that has $(s, P)$ as its code. (See Figure \ref{fig-loop-thresh-codes}.) 

\begin{figure}[ht!] 
\begin{center}
\begin{tikzpicture}
\node[label=below:2] at (0,0) [circle,fill,inner sep=3.5pt]{};
\node[label=below:3] at (1,0) [circle,fill,inner sep=3.5pt]{};
\node[label=below:1] at (2,.5) [circle,fill,inner sep=1.5pt]{};
\node[label=below:4] at (3,1.5) [circle,fill,inner sep=1.5pt]{};
\node[label=below:5] at (4,3) [circle,fill,inner sep=3.5pt]{};
\node[label=below:1] at (7,0) [circle,fill,inner sep=1.5pt]{};
\node[label=below:3] at (8,0) [circle,fill,inner sep=3.5pt]{};
\node[label=below:5] at (9,.5) [circle,fill,inner sep=1.5pt]{};
\node[label=below:2] at (10,1.5) [circle,fill,inner sep=3.5pt]{};
\node[label=below:4] at (11,3) [circle,fill,inner sep=3.5pt]{};
\draw (1,0) -- (0,0);
\draw (4,3) -- (0,0);
\draw (4,3) -- (1,0);
\draw (4,3) -- (2,.5);
\draw (4,3) -- (3,1.5);
\draw (10,1.5) -- (7,0);
\draw (10,1.5) -- (8,0);
\draw (10,1.5) -- (9,.5);
\draw (11,3) -- (7,0);
\draw (11,3) -- (8,0);
\draw (11,3) -- (9,.5);
\draw (11,3) -- (10,1.5);
\draw (7,0) -- (8,0);
\end{tikzpicture}
\end{center}
\caption{Two labelled loop threshold graphs. The one on the left has code $(+,23/14/5)$, while the one on the right has code $(-,1/3/5/24)$. (Larger nodes are looped, smaller nodes are unlooped.)}
\label{fig-loop-thresh-codes}
\end{figure}
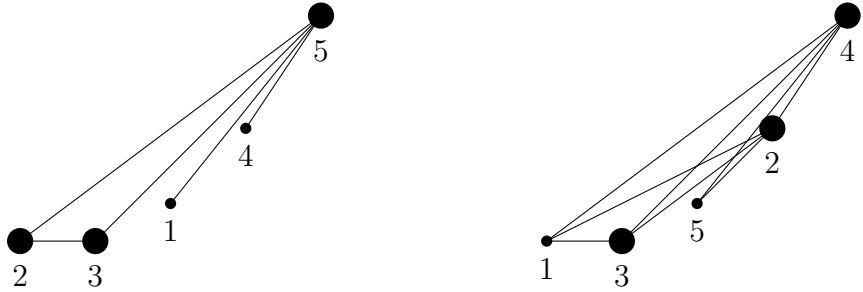

Since the number of ordered partitions of $[n]$ is $\sum_{k=1}^n k!\stirling2{n}{k}$, \eqref{eq-lt-basic'} follows.

\subsection{Proof of \eqref{ltn2-conj}} \label{sec-lt-Frob}

To goal of this section is to give a combinatorial proof of the identity 
\begin{equation} \label{ltn2-conj'}
\lt_n = \sum_{k=0}^{n-1} \eulerian{n}{k}2^{k+1},
\end{equation}
and also to extend it to a combinatorial proof of the Frobenius identity \eqref{frobenius}.

In Section \ref{sec-lt-initial-proofs} we observed that labelled loop-threshold graphs on vertex set $[n]$ may be encoded by pairs $(s, P)$ where $s \in \{+,-\}$ and $P$ is an ordered partition of $[n]$. Since the right-hand side of \eqref{ltn2-conj'} enumerates triples $(s,\pi, c)$ where $s \in \{+,-\}$, $\pi$ is a permutation of $[n]$, and $c$ is a $2$-coloring of the ascents of $\pi$, to establish \eqref{ltn2-conj'} it suffices to establish a bijection between the set of ordered partitions of $[n]$, and the set of permutations of $[n]$ together with a $2$-coloring of the ascents of the permutation.

Here is one such bijection: given an ordered partition $P$ of $[n]$, set $\pi=\pi_f(P)$ (see Definition \ref{def-flat}). This permutation has some ascents $\pi_i$ with the property that $\pi_i$ and $\pi_{i+1}$ are both in the same block of $P$. Color these ascents red, and color the remaining ascents blue. This is easily seen to an invertible map. Indeed, given a permutation with a $2$-coloring of its ascents, the ordered partition that it comes from can be obtained by breaking the permutation at descents, and at ascents colored blue.  

This argument easily extends to a combinatorial proof of the Frobenius identity, which recall says
\begin{equation} \label{frobenius'}
\sum_{k=0}^{n-1} \eulerian{n}{k}x^k = \sum_{\ell=1}^n \ell!\stirling2{n}{\ell}(x-1)^{n-\ell}.
\end{equation}
When $x$ is a positive integer, the right-hand side of \eqref{frobenius} counts
\begin{quote}
ordered partitions of $[n]$, together with a coloring (from a palette $[x-1]$ of $x-1$ colors) of the elements of $[n]$ that are not the largest entry in their blocks, 
\end{quote}
while the left-hand side counts
\begin{quote}
permutations of $[n]$, together with a coloring (from a palette $[x]$ of $x$ colors) of the ascents of the permutation. 
\end{quote}
Here is a bijection from the first set to the second. Given an ordered partition $P$ of $[n]$, together with an $(x-1)$-coloring $c$ of the elements that are not largest in their blocks, set $\pi=\pi_f(P)$ (see Definition \ref{def-flat}). All elements of $[n]$ that are not largest in their blocks are ascents in $\pi$, so the coloring $c$ is a partial $x$-coloring of the ascents of $\pi$. Extend it to a full $x$-coloring by giving all remaining ascents color $x$. This map is easily seen to be invertible. Indeed, given a permutation with an $x$-coloring of its ascents, the ordered partition that it comes from can be obtained by breaking the permutation at all descents, and at ascents given color $x$.

\subsection{Proof of \eqref{ltn1-conj}} \label{sec-lt-involution}

The goal of this section is to give a combinatorial proof of the identity 
\begin{equation} \label{ltn1-conj'}
\lt_n = \sum_{k=1}^n (-1)^{n-k}\stirling2{n}{k}k!2^k
\end{equation}
for $n \geq 0$. We have dropped the $k=0$ from \eqref{ltn1-conj} here, since $\stirling2{n}{0} = 1$ for $n \geq 1$; note that \eqref{ltn1-conj} holds vacuously for $n=0$.
The proof of \eqref{ltn1-conj'} will be via a sign-changing involution; see the discussion just after \eqref{qtn1}.

We begin by observing that $\displaystyle\sum_{k=1}^n \stirling2{n}{k}k!2^k$ counts pairs $(P,c)$ where $P$ is an ordered partition of $[n]$ and $c:\{P_1,\ldots,P_k\} \rightarrow \{{\rm red},{\rm blue}\}$ is a $2$-coloring of the blocks of $P$.  

Let ${\mathcal P}_n$ be the collection of all such pairs, and let ${\mathcal P}_{n,k}$ be those pairs in which $P$ has $k$ blocks (so our goal is to establish $\lt_n = \sum_{k=1}^n (-1)^{n-k}|{\mathcal P}_{n,k}|$). Let ${\mathcal L}_n \subseteq {\mathcal P}_n$ be the set of pairs $(P,c)$ with $P=P_1/\cdots/P_k$ with the following two properties:
\begin{itemize}
\item each block $P_i$ in $P$ is a singleton, say $P_i=\{p_i\}$, (so $k=n$ and ${\mathcal L}_n \subseteq {\mathcal P}_{n,n}$) and
\item if $c(P_i)=c(P_{i+1})$ (that is, if two consecutive blocks have the same color) then $p_i < p_j$. 
\end{itemize}

\begin{claim} \label{clm-lltg}
$\lt_n = |{\mathcal L}_n|$.
\end{claim}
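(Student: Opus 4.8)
The plan is to exhibit a bijection between $\mathcal{L}_n$ and the set of pairs $(s,P)$ encoding labelled loop-threshold graphs on $[n]$, which by Section~\ref{sec-lt-initial-proofs} has size $\lt_n$; here $s \in \{+,-\}$ and $P$ is an ordered partition of $[n]$ with no restriction on the first block. Recall that an element of $\mathcal{L}_n$ is an ordered partition of $[n]$ into singletons $\{p_1\}/\cdots/\{p_n\}$ together with a $2$-colouring of the blocks, subject to the constraint that consecutive equal-coloured blocks appear in increasing order of their elements; equivalently, the data is a permutation $p_1 \cdots p_n$ of $[n]$ together with a $2$-colouring of its positions such that every pair of adjacent positions with the same colour forms an ascent. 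So I need to biject these ``constrained coloured permutations'' with pairs $(s, Q)$, $s \in \{+,-\}$, $Q$ an unrestricted ordered partition of $[n]$.

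First I would observe that the flattening operation $\pi_f$ of Definition~\ref{def-flat}, run in reverse, converts an ordered partition $Q$ into a permutation by concatenating the increasing listings of the blocks; the positions internal to a block are exactly ascents, and one recovers $Q$ from the permutation by knowing which ascents are ``within-block.'' This suggests the following map on $\mathcal{L}_n$: given the constrained coloured permutation $p_1 \cdots p_n$, walk along it and cut it into maximal runs of consecutive equal-coloured positions. Within each such run all adjacent pairs are ascents (by the constraint), so each run is an increasing sequence — a legitimate block. Declaring the resulting blocks (in order) to be $Q$, and letting $s$ record the colour of the first run (say $s=+$ if the first block is red, $s=-$ if blue), gives a pair $(s,Q)$. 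The key point making this reversible is that the cut points are exactly the descents together with the same-colour-to-different-colour boundaries — but I need to argue the colouring is fully determined by $(s,Q)$: start the first block with colour dictated by $s$, and at each block boundary keep the colour the same if the boundary is an ascent (i.e.\ last element of previous block $<$ first element of next block) and flip it if the boundary is a descent. One checks this reconstruction lands back in $\mathcal{L}_n$: within a block we get equal-coloured ascents (fine), and across a boundary either it is a descent (colours differ, so no constraint) or it is an ascent with equal colours, which is allowed and in fact forced.

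The main subtlety — and the step I expect to be the real content — is verifying that this colour-propagation rule is well-defined and genuinely inverse to the cutting map, i.e.\ that cutting a coloured permutation into maximal monochromatic runs and then re-propagating colours recovers the original colouring, and conversely. The delicate case is a block boundary that happens to be an ascent: in the forward direction such a boundary must be a colour change (else the two runs would not be maximal), and in the backward direction we are told to keep the colour the same — so I must make sure the cutting is into maximal monochromatic runs while the recombination flips colour only at descents, and confront whether these two conventions are compatible. They are, because in a pair $(s,Q)$ the block boundaries carry no colour information at all, so we are free to stipulate the flip-at-descents rule as the definition of the inverse; what needs checking is that the \emph{forward} map (cutting at every descent and every colour change) is the one that undoes it, and that no information is lost. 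I would spell this out by tracking a single boundary: if it is a descent the forward map cuts and the backward map flips; if it is an ascent with unequal colours the forward map cuts and the backward map keeps — but this last combination cannot arise from the backward map, so I should instead define the forward cut to occur precisely at descents, with colour changes at ascents handled by the constraint being vacuous, and re-examine maximality. Once this bookkeeping is pinned down the bijection is immediate, and combined with $\sum_{k=1}^n \stirling2{n}{k}k!2^k = |\mathcal{P}_n|$ and the observation that $\mathcal{L}_n$ is the fixed-point set of the sign-changing involution to be described next, the claim $\lt_n = |\mathcal{L}_n|$ follows.
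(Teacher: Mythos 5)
Your forward map---cut an element of $\mathcal{L}_n$ into maximal monochromatic runs, each of which is forced by the constraint to be increasing, and declare these runs to be the blocks of $Q$ with $s$ recording the colour of the first run---is exactly right and is the inverse of the bijection the paper gives. But your proposed way to run it backwards is wrong, and you seem to half-notice this without resolving it. You suggest propagating colours from left to right by ``keep the colour at an ascent boundary, flip at a descent boundary''; that is not the inverse. Consider $(s,Q)=(+,\,1/23)$: the boundary between $1$ and $23$ is an ascent ($1<2$), so your rule keeps the colour and produces the all-red sequence $\textcolor{red}{1}/\textcolor{red}{2}/\textcolor{red}{3}$---but this is also what $(+,123)$ produces, so your reconstruction is not even injective, and indeed it does not land in $\mathcal{L}_n$ in a way that recovers $(+,1/23)$ upon cutting. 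The correct inverse is simpler than anything you considered: flip the colour at \emph{every} block boundary, i.e.\ colour the blocks of $Q$ alternately, red/blue/red/$\cdots$ if $s=+$ and blue/red/blue/$\cdots$ if $s=-$. Then consecutive same-coloured singletons necessarily come from the same block of $Q$, hence are increasing (so the result is in $\mathcal{L}_n$), and maximal monochromatic runs are exactly the blocks of $Q$ (so the cutting map undoes it). The alternation pattern is precisely what lets $s$ and the block structure of $Q$ be read off unambiguously from the colouring; your descent-based rule discards the information that distinguishes ``new block starting with a larger element'' from ``same block continuing.''

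Two smaller points. First, when you say ``define the forward cut to occur precisely at descents,'' note that cutting only at descents also fails to be injective (both $\textcolor{red}{1}/\textcolor{blue}{2}/\textcolor{blue}{3}$ and $\textcolor{red}{1}/\textcolor{red}{2}/\textcolor{red}{3}$ have no descents and would collapse to $123$), so neither of the two conventions you oscillate between works; only the alternate-colouring rule does. Second, your closing sentence is a non sequitur: the claim $\lt_n = |\mathcal{L}_n|$ is established entirely by the bijection; it does not need $|\mathcal{P}_n|$ or the sign-changing involution, which are used afterwards to finish the proof of \eqref{ltn1-conj} from this claim, not to prove the claim itself.
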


\begin{proof}
Recall from Section \ref{sec-lt-initial-proofs} that
a labelled loop-threshold graph on $[n]$ can be encoded by a pair $(s, P)$ where $s \in \{+,-\}$ and $P$ is an ordered partition of $[n]$. Given such a pair, we modify it to turn it into an element of ${\mathcal L}_n$ as follows. Turn $P=P_1/\cdots/P_k$ into an ordered partition $P'$ consisting of singleton blocks, by putting every element of block $i$ before every element of block $j$ for each $i < j$, and within block $i$, putting the elements in increasing order. If $s=+$, then let $c(a)$ be red if $a$ is in block $P_i$ of $P$ for some odd $i$, and blue otherwise; if $s=-$ then flip the roles of red and blue. This gives a map from labelled loop-threshold graph on $[n]$ to ${\mathcal L}_n$ that is clearly invertible. 
\end{proof}

\begin{example} \label{ex-loopbij}
The pair $(+,23/14/5)$ maps to the pair  $(\textcolor{red}{2}/\textcolor{red}{3}/\textcolor{blue}{1}/\textcolor{blue}{4}/\textcolor{red}{5}, c)$ (a coloured ordered partition) where $c^{-1}({\rm red})=\{2,3,5\}$ and $c^{-1}({\rm blue})=\{1,4\}$, while $(-,1/3/5/24)$ maps to $(\textcolor{blue}{1}/\textcolor{red}{3}/\textcolor{blue}{5}/\textcolor{red}{2}/\textcolor{red}{4},c)$ where $c^{-1}({\rm red})=\{2,3,4\}$ and $c^{-1}({\rm blue})=\{1,5\}$.
\end{example}

In the presence of Claim \ref{clm-lltg}, to prove \eqref{ltn1-conj'} it suffices to find an involution $\iota$ on ${\mathcal P}_n$ with the following two properties:
\begin{enumerate}[(i)]
\item if $\{(P,c),(P',c')\}$ is an orbit of size two of $\iota$, then one of $(P,c), (P',c')$ is in ${\mathcal P}_{n,k}$ with $k$ even, and the other is in ${\mathcal P}_{n,k}$ with $k$ odd (that is, $\iota$ changes the parity of the number of blocks in $P$), and
\item The set of fixed points of $\iota$ is ${\mathcal L}_{n}$.
\end{enumerate}

We first describe such an $\iota$, and then show that it has all the claimed properties.

Given an ordered partition $P=P_1/\cdots/P_k$ of $[n]$ and a $2$-coloring $c$ of the blocks of $P$, scan the blocks in increasing order ($P_1$ first, then $P_2$, et cetera), until the first time that a block $P_i$ is encountered that satisfies one of the following two (mutually exclusive) conditions:
\begin{enumerate}[(A)]
\item $P_i$ is not a singleton, or
\item $P_i=\{p_i\}$ is a singleton, and furthermore satisfies
\begin{itemize}
\item $i < k$ (that is, $P_i$ is not the final block of $P$),
\item $c(P_i) = c(P_{i+1})$, and 
\item $p_i > p$ for all $p \in P_{j+1}$ (that is, the unique element in $P_i$ is larger than everything in the block that comes after it).
\end{itemize}
\end{enumerate}

If the first such block that is encountered is $P_i$, of type (A), with $P_i=\{p_{i1}, \ldots, p_{i\ell}\}$ ($p_{i1} <\cdots < p_{i\ell}$, $\ell \geq 2$), then modify $(P,c)$ as follows.
\begin{itemize}
\item Replace $P$ with $P'=P_1/\cdots/P_{i-1}/P_i'/P_i''/P_{i+1}/\cdots/P_k$ where $P_i'=p_{i\ell}$ and $P_i''=\{p_{i1}, \ldots, p_{i(\ell-1)}\}$, and
\item replace $c$ with $c'$ that agrees with $c$ on $P_j$, $j \neq i$, and also satisfies $c'(P_i')=c'(P_i'')=c(P_i)$.
\end{itemize}
(See Example \ref{ex-flip-bij} below.)
In this case, set $\iota(P,c)=(P',c')$.

If the first such block encountered is $P_i=\{p_i\}$ of type (B), with $P_{i+1}=\{p_{i1}, \ldots, p_{i\ell}\}$ ($p_{i1} <\cdots < p_{i\ell}$, $\ell \geq 2$), then modify $(P,c)$ as follows.
\begin{itemize}
\item Replace $P$ with $P'=P_1/\cdots/P_{i-1}/P_i \cup P_{i+1}/P_{i+2}/\cdots/P_k$, and
\item replace $c$ with $c'$ that agrees with $c$ on $P_j$, $j \neq i$, and also satisfies $c'(P_i \cup P_{i+1})=c(P_i)=c(P_{i+1})$ (note that $P_i, P_{i+1}$ have the same color by definition of a type (B) block).
\end{itemize}
(Again, see Example \ref{ex-flip-bij}.)
In this case, set $\iota(P,c)=(P',c')$.

\begin{example} \label{ex-flip-bij}
The pair $(\textcolor{red}{5}/\textcolor{red}{3}/\textcolor{blue}{7}/\textcolor{red}{1}/\textcolor{blue}{246}/\textcolor{blue}{89},c)\in {\mathcal L}_9$ with $c^{-1}({\rm red})=\{1,3,5\}$ and $c^{-1}({\rm blue})=\{2,4,6,7,8,9\}$ gets mapped by $\iota$ to $(\textcolor{red}{5}/\textcolor{red}{3}/\textcolor{blue}{7}/\textcolor{red}{1}/\textcolor{blue}{6}/\textcolor{blue}{24}/\textcolor{blue}{89},c)$, and vice versa.
\end{example}

Call a block a {\it flip block} if it is a block of either type (A) or type (B). If no flip blocks are encountered, then set $\iota(P,c)=(P,c)$.

It is clear that if $(P,c)$ is not a fixed point of $\iota$, then the parity of the number of blocks of $\iota(P,c)$ is different from the parity of the number of blocks of $(P,c)$ ($\iota$ either increases or decreases the number of blocks by $1$). It is also clear, from the definition of ${\mathcal L}_n$ and the construction of $\iota$, that the set of fixed points of $\iota$ (pairs $(P,c)$ without a flip block) is precisely ${\mathcal L_n}$. The proof of \eqref{ltn1-conj'} will thus be completed by showing that $\iota$ is an involution, which we now do.

\begin{claim}
$\iota$ is an involution.
\end{claim}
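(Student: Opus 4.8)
The plan is to verify directly that $\iota(\iota(P,c))=(P,c)$ for every $(P,c)\in{\mathcal P}_n$. If $(P,c)$ has no flip block this is immediate, so assume the first flip block is $P_i$. The first thing I would record is the elementary but crucial observation that being a flip block is a \emph{local} property of a block: whether $P_j$ is a flip block depends only on $P_j$ and $c(P_j)$ (type (A): is $P_j$ a non-singleton?) together with $P_{j+1}$, $c(P_{j+1})$, and whether $j$ is the last index (type (B)). In particular, since $P_i$ is the \emph{first} flip block, each of $P_1,\ldots,P_{i-1}$ is a singleton (a non-singleton would be a type (A) flip block), and none of them satisfies the type (B) conditions in $(P,c)$.

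Next I would split into the two cases for the type of $P_i$ and locate the first flip block of $\iota(P,c)$. The heart of the argument is the claim that \emph{positions $1,\ldots,i-1$ are still non-flip in $\iota(P,c)$, while the block now in position $i$ is a flip block of the opposite type.} For positions $1,\ldots,i-2$ this is immediate from locality, since those blocks and their successors are untouched. The one block needing attention is the one in position $i-1$, whose successor changes (from $P_i$ to $P_i'$ in the type (A) case, to $P_i\cup P_{i+1}$ in the type (B) case). Here one uses the key fact that in both cases the \emph{maximum} of the new successor equals $\max P_i$ — for type (A) because the new successor is $P_i'=\{\max P_i\}$, and for type (B) because $\max(P_i\cup P_{i+1})=\max P_i$ by the defining inequality of a type (B) block. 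Since the block in position $i-1$ stays a non-final singleton with unchanged color, its type (B) condition in $\iota(P,c)$ is literally the same (failing) condition it had in $(P,c)$, so it remains non-flip; thus the first flip block of $\iota(P,c)$ sits in position $i$.

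It then remains to check that this block is of the opposite type and that the matching modification inverts the original one. In the type (A) case, position $i$ holds $P_i'=\{\max P_i\}$, a non-final singleton followed by $P_i''$ of the same color with $\max P_i>\max P_i''$, hence a type (B) block; merging $P_i'$ and $P_i''$ (both colored $c(P_i)$) restores $P_i$ and its color. In the type (B) case, position $i$ holds $P_i\cup P_{i+1}$, a block of size $\ge 2$, hence of type (A); splitting off its maximum yields $\{\max P_i\}=P_i$ and $(P_i\cup P_{i+1})\setminus P_i=P_{i+1}$, both colored $c(P_i)=c(P_{i+1})$, again restoring $(P,c)$. Either way $\iota(\iota(P,c))=(P,c)$.

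I expect the only genuine subtlety — the main obstacle — to be the bookkeeping at position $i-1$: one must be certain that applying $\iota$ does not spawn a flip block earlier than position $i$, which would spoil the second application. As indicated above this collapses to the single observation that the split/merge operation preserves the maximum of the block in position $i$, so the obstacle is really one of careful phrasing rather than of difficulty; the boundary situations ($i=1$, or $P_i$ the last block in the type (A) case) are then subsumed automatically by the locality observation.
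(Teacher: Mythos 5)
Your proof is correct and takes essentially the same approach as the paper: locate the first flip block $P_i$, show that applying $\iota$ produces a flip block of the opposite type in position $i$, and rule out an earlier flip block appearing in $\iota(P,c)$ by examining positions $<i$. Your framing is a bit cleaner than the paper's — you isolate up front the locality principle (flip-block status at position $j$ depends only on $P_j$, $c(P_j)$, $P_{j+1}$, $c(P_{j+1})$, and whether $j$ is last) and the single numerical fact that the split/merge preserves $\max$ of the block in position $i$, which unifies the type (A) and type (B) cases at position $i-1$ into one observation; the paper instead runs through the subcases at position $i-1$ one at a time, arriving at the same ``$\max$ is preserved'' point in the final subcase.
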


\begin{proof}
Suppose that the first flip block that is encountered in $(P,c)$ is a block of type (A). With the notation as in the definition of $\iota$ above, consider the pair $(P',c')$. This certainly has a block of type (B), namely $P_i'=p_{i\ell}$. If $P_i'$ is the first block of either type (A) or type (B) in $P'$, then $\iota(P',c')=(P,c)$. We claim that $P_i'$ is indeed the first such block. 

Indeed, if $P_j$ with $j < i-1$ is a flip block in $(P',c')$, then clearly it is also a flip block in $(P',c')$, a contradiction. If $P_{i-1}$ is a block of type (A) in $(P',c')$, then it is a block of type (A) in $(P,c)$, again a contradiction. 

Finally, if $P_{i-1}$ is a block of type (B) in $(P',c')$, then it is a singleton in both $P$ and $P'$, is not the last block in either, has the same color under $c'$ as $P_i'$, so has the same color under $c$ as $P_i$, and its one element is larger than $p_{i\ell}$, so is larger than every element in $P_i$. It follows that $P_{i-1}$ is a block of type (B) in $(P,c)$, a contradiction. The conclusion is that if the first block that is encountered in $(P,c)$ is a block of type (A), then $(\iota \circ \iota)(P,c)=(P,c)$.

Now suppose that the first flip block that is encountered in $(P,c)$ is a block of type (B). With the notation as in the definition of $\iota$ above, consider the pair $(P',c')$. This certainly has a block of type (A), namely $P_i \cup P_{i+1}$. If this is the first flip block in $P'$, then $\iota(P',c')=(P,c)$. We claim that $P_i \cup P_{i+1}$ is indeed the first such block.

Indeed, as before, if either $P_j$ with $j < i-1$ is a flip block in $(P',c')$, or if $P_{i-1}$ is a block of type (A) in $(P',c')$, then we quickly arrive at a contradiction.

The remaining case to consider is when $P_{i-1}$ is a block of type (B) in $(P',c')$. In this case it is a singleton in both $P$ and $P'$, is not the last block in either, it has the same color under $c'$ as $P_i \cup P_{i+1}$, so has the same color under $c$ as $P_i$, and its one element is larger than everything in $P_i \cup P_{i+1}$, so is larger than the one element in $P_i$. It follows that $P_{i-1}$ is a block of type (B) in $(P,c)$, a contradiction. The conclusion is that if the first flip block  that is encountered in $(P,c)$ is a block of type (B), then $(\iota \circ \iota)(P,c)=(P,c)$. This completes the verification that $\iota$ is an involution.
\end{proof}

\subsection{Counting labelled loop-threshold graphs by number of components} \label{sec-lt-comp}

Let $\ltcomp_{n,k}$ be the number of labelled loop-threshold graphs on vertex set $[n]$ with $k$ components. The goal of this section is to give the straightforward verification of the identities (valid for $n, k \geq 1$)
\begin{enumerate}[(i)]
\item $\ltcomp_{n,1} = \sum_{k=0}^{n-1} \eulerian{n}{k}2^k$,
\item for $n \geq 3$ and $2 \leq k \leq n-1$, $\ltcomp_{n,k} = \binom{n}{k-1} \ltcomp_{n-k+1,1}$, and
\item for $n \geq 2$, $\ltcomp_{n,n}=n+1$. 
\end{enumerate}

To see (i), recall that labelled loop-threshold graphs on $[n]$ are encoded by pairs $(s,P)$ where $s\in \{+,-\}$ and $P$ is an ordered partition of $[n]$. From the correspondence given in Section \ref{sec-lt-initial-proofs} we see that if $P$ has an odd number of blocks then $(+,P)$ is connected (its construction ends with the addition of some looped dominating vertices) while $(-,P)$, whose construction ends with the addition of some isolated vertices, is not connected. On the other hand if $P$ has an even number of blocks then $(-,P)$ is connected while $(+,P)$ is not. It follows that half of all loop-threshold graphs on $[n]$ are connected, and (i) follows from \eqref{ltn2-conj'}. (This argument could also have been presented as follows. The map that sends a graph to its complement is an involution on loop-threshold graphs that sends connected graphs to disconnected, and vice-versa).

For (ii): for $n \geq 3$ and $2 \leq k \leq n-1$, the set of labelled loop-threshold graphs on vertex set $[n]$ with $k$ components is obtained by choosing $k-1$ elements from $[n]$, choosing a connected labelled loop-threshold graph on the remaining $n-k+1$ elements, and then adding the chosen $k-1$ elements as isolated vertices. 

For (iii): up to isomorphism there are two loop-threshold graphs on vertex set $[n]$ with $n$ components. One is the edgeless graph, which has one possible labelling. The other consists of a looped vertex together with $n-1$ isolated vertices, and this has $n$  possible labellings.

\section{Proofs for quasi-loop-threshold graph results} 
\label{sec-quasi-loop-thresh-proofs}

Let $\qlt_n$ denote the number of labelled quasi-loop-threshold graphs on $n$ vertices. One goal of this section is to combinatorially establish the formula 
\begin{equation} \label{qltn1'}
\qlt_n = \sum_{k=1}^{n+1} \stirling2{n+1}{k} k^{k-2}.
\end{equation}

The second goal is to show that $\qltconn_n$, the number  of connected labelled quasi-loop-threshold graphs on $n$ vertices, satisfies $\qltconn_2=2$ and for $n \geq 2$  
\begin{equation} \label{qlt-conn'}
\qltconn_n = \sum_{k=1}^n \stirling2{n}{k}
k^{k-1}.
\end{equation}

We start with \eqref{qlt-conn'}, which will prove to be a useful stepping-stone to \eqref{qltn1'}. 
\begin{claim} \label{clm-rpt}
For $n \geq 1$, $\sum_{k=1}^n \stirling2{n}{k}k^{k-1}$ (the right-hand side of \eqref{qlt-conn'}) enumerates labelled quasi-loop-threshold graphs on $n$ vertices that have at least one looped dominating vertex. 
\end{claim}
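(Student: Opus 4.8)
The right-hand side of \eqref{qlt-conn'} counts \emph{rooted partition trees} on $[n]$: a partition of $[n]$ into $k$ nonempty blocks (the factor $\stirling2{n}{k}$), together with a rooted tree on the set of blocks (the factor $k^{k-1}$, essentially Cayley's formula for rooted labelled trees). Write $r_n$ for the number of these. The plan is to show that $r_n$ equals the number of labelled quasi-loop-threshold graphs on $[n]$ having at least one looped dominating vertex. (This then gives \eqref{qlt-conn'} for $n\ge 2$: a looped dominating vertex is adjacent to every vertex, so its presence forces connectedness, while conversely every connected quasi-loop-threshold graph on $n\ge 2$ vertices acquires a looped dominating vertex at some step of any iterative construction, and retains one; so for $n\ge 2$ the set counted by the Claim is exactly the set of connected labelled quasi-loop-threshold graphs.)

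First I would record a structural decomposition. Let $C$ be a quasi-loop-threshold graph on $[n]$ with at least one looped dominating vertex, and let $D\neq\emptyset$ be the set of \emph{all} looped dominating vertices of $C$. Then $C-D$ is a quasi-loop-threshold graph (quasi-loop-threshold graphs are hereditary) with \emph{no} looped dominating vertex: a looped dominating vertex $v$ of $C-D$ would be looped and adjacent to all of $C-D$, and also adjacent to every vertex of $D$ (each vertex of $D$ being adjacent to $v$), hence looped and adjacent to all of $C$, forcing $v\in D$ — a contradiction. Conversely, given any $D\subseteq[n]$ with $D\neq\emptyset$ and any quasi-loop-threshold graph $H$ on $[n]\setminus D$ with no looped dominating vertex, adding $D$ to $H$ as a set of looped dominating vertices yields a quasi-loop-threshold graph whose set of looped dominating vertices is exactly $D$ (no vertex of $H$ can become one, since that vertex would have to have been a looped dominating vertex of $H$). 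Thus $C\mapsto(D,\,C-D)$ is a bijection, and
\[
\#\{\text{such }C\text{ on }[n]\}\;=\;\sum_{d=1}^{n}\binom{n}{d}\,p_{\,n-d},
\]
where $p_m$ is the number of quasi-loop-threshold graphs on $[m]$ with no looped dominating vertex (and $p_0=1$). In precisely the same way, splitting a rooted partition tree into its root block and the rooted partition forest hanging below yields $r_n=\sum_{d=1}^{n}\binom{n}{d}R_{\,n-d}$, where $R_m$ is the number of rooted partition forests on $[m]$. So the whole identity reduces to the single statement $p_m=R_m$ for all $m\ge 0$.

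Proving $p_m=R_m$ is the remaining, and main, task. On one side the building blocks are connected quasi-loop-threshold graphs (namely $K_1$ and $K_1^{\rm loop}$ on one vertex, and — by the portion of the argument being bootstrapped — graphs counted by $r_s$ for $s\ge 2$), and a graph counted by $p_m$ is a disjoint union of such blocks which is not a single connected graph that possesses a looped dominating vertex; on the other side the blocks are rooted partition trees and $R_m$ counts arbitrary forests of them. The obstacle is exactly that the loop/no-loop distinction at a single vertex is invisible to a rooted partition tree on one vertex, so the correspondence cannot be carried out component-by-component (nor block-by-block); the loop information has to be threaded into the \emph{shape} of the forest, which is what makes the count come out with no spurious powers of $2$, in contrast to the sign-changing-involution situation for quasi-threshold graphs in Section \ref{sec-qtn1-proof}. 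I expect to resolve this by a local modification in the spirit of the block-to-path replacement used there: a block of size $b$ in the forest is read as a chain of $b$ vertices, with the looped vertices of the graph being precisely those positions of the forest that have a successor along their chain or a descendant below them, and the unlooped vertices being the terminal positions. The crux is to check that this modification is well defined and invertible — equivalently, that the relations $r_n=\sum_d\binom nd R_{n-d}$ and $R=\exp(r)$ (encoding the functional equation $r(x)=(e^{x}-1)e^{r(x)}$) are realized by an explicit bijection. Granting this, $p_m=R_m$, hence the number of quasi-loop-threshold graphs on $[n]$ with at least one looped dominating vertex is $r_n=\sum_{k}\stirling2{n}{k}k^{k-1}$, and \eqref{qlt-conn'} follows for $n\ge 2$, the value $\qltconn_1=2$ exceeding $r_1=1$ because $K_1$, though connected, has no looped dominating vertex.
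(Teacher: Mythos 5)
Your reduction is clean as far as it goes: peeling off the set $D$ of \emph{all} looped dominating vertices (on the graph side) and the root block (on the tree side) yields $\#\{C\} = \sum_{d\ge 1}\binom nd p_{n-d}$ and $r_n = \sum_{d\ge 1}\binom nd R_{n-d}$, so the claim indeed follows once you know $p_m = R_m$, i.e.\ once you have a bijection between quasi-loop-threshold graphs on $[m]$ with no looped dominating vertex and rooted partition \emph{forests} on $[m]$. But that is where the proposal stops: you yourself flag $p_m=R_m$ as ``the remaining, and main, task,'' and what you offer in its place --- a block-to-chain replacement with loops read off from ``positions that have a successor or descendant'' --- is a gesture toward the phylogenetic-forest modification from the quasi-threshold argument, not a construction that is well defined here. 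It is not even clear what vertex set that modified forest lives on, or how it recovers the loop information without double counting. So this is a genuine gap, not a routine verification: the heart of the claim is precisely the bijection you have deferred.

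The paper's proof goes a different route and supplies the missing idea directly. It builds, by induction, a bijection $G\mapsto T'(G)$ from quasi-loop-threshold graphs on a set with at least one looped dominating vertex to rooted partition trees on that set. The root of $T'(G)$ is the set $A$ of looped dominating vertices; the crux is how the components of $G-A$ are attached. The key structural observation --- which your sketch does not capture --- is that if $G-A$ has exactly one component, then that component must be a single unlooped isolated vertex (any looped dominating vertex of a unique component would already belong to $A$). Consequently, if $G-A$ has no isolated vertices it must have at least two components. This dichotomy is what makes the encoding invertible: when $G-A$ has isolated vertices, they are collected into a single block $B$ that becomes the \emph{unique} child of the root, with the remaining components' trees hanging off $B$; when $G-A$ has no isolated vertices, its $\ell\ge2$ components' trees hang directly off the root. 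One can then read off which case occurred from whether the root has one child or at least two. That ``root degree'' trick, rather than any chain-length device, is how the loop/isolated distinction gets threaded into the shape of the tree, and it is exactly the piece your proof attempt needs but does not supply.
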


This immediately yields \eqref{qlt-conn'} --- for $n \geq 2$, being connected and having at least one looped dominating vertex are equivalent, and \eqref{qlt-conn'} is easy for $n=1$.

\medskip

\begin{proof}[Proof (of Claim \ref{clm-rpt})]
Since for $n \geq 1$ the right-hand side of \eqref{qlt-conn'} counts rooted partition trees on $[n]$ --- recall, these are rooted labelled trees whose vertex set is a partition of $[n]$ ---  we will obtain the claim by exhibiting a bijection from labelled quasi-loop-threshold graphs on vertex set $[n]$ that have at least one looped dominating vertex, to rooted partition trees on $[n]$.

The bijection is defined inductively (and is illustrated in Figure \ref{fig-qltconn}). On a vertex set of size one there is just one labelled quasi-loop-threshold graph with a looped dominating vertex, and just one possible rooted tree, so there is nothing to do for $n=1$. 
Given a labelled quasi-loop-threshold graph $G$ on vertex set $[n]$, $n \geq 2$, that has at least one looped dominating vertex, let $A$ be the set of looped dominating vertices. We begin constructing the tree $T'(G)$ associated with $G$ by letting the root of $T'(G)$ have label $A$. 

By the definition of quasi-loop-threshold graphs, on deleting $A$ from $G$ the result is a graph $G'$ that has a number of components, some of which might themselves have looped dominating vertices, and some of which might not (and note that components that do not have a looped dominating vertex must be isolated vertices --- components of size $1$ that are not looped). It is possible that $G'$ may have only one component, and in this case that component must be isolated (if not, then any looped dominating vertex of the sole component of $G'$ would in fact be an element of $A$). From this it follows that if $G'$ has no components that are isolated vertices, than $G'$ must have at least two components (all of which have looped dominating vertices). This is the crucial fact that gets used in the tree construction below, that allows us to distinguish between isolated vertices in $G'$ and components that consist of a single looped vertex.          

Specifically, if $G'$ has some isolated vertices, let $B$ be the set of such vertices, and let the root $A$ of $T'(G)$ have one child whose label is $B$. Each of the remaining components of $G'$, say $C_1, \ldots, C_\ell$, has at least one looped dominating vertex, and so by induction has an associated rooted partition tree. Complete the construction of $T'(G)$ by joining each of $T'(C_1), \ldots, T'(C_\ell)$ to $B$ via edges from  the roots of the $T'(C_i)$. 

If, on the other hand, $G'$ has no isolated vertices, then $G'$ must have at least two components, each of which has a looped dominating vertex. Let $C_1, \ldots, C_\ell$ be these components. Complete the construction of $T'(G)$ by joining each of $T'(C_1), \ldots, T'(C_\ell)$ to $A$ via edges from  the roots of the $T'(C_i)$.   

This process associates a rooted partition tree on $[n]$ to each labelled quasi-loop-threshold graph on vertex set $[n]$ with at least one looped dominating vertex (see Figure \ref{fig-qltconn}), and the process is invertible. This completes the verification that the the right-hand side of \eqref{qlt-conn'} counts labelled quasi-loop-threshold graphs on vertex set $[n]$ with at least one looped dominating vertex. 
\end{proof}

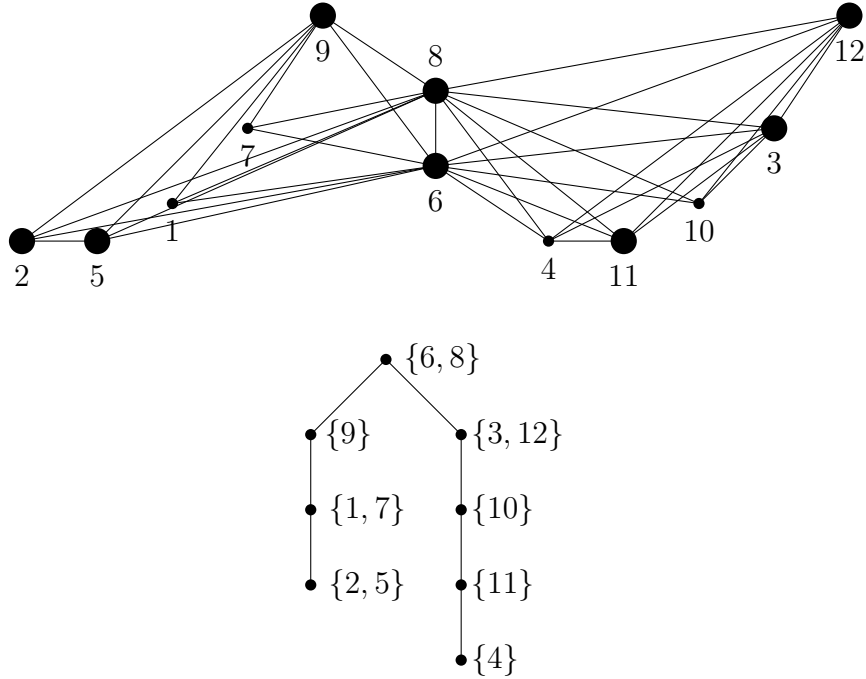
\begin{figure}[ht!] 
\begin{center}
\begin{tikzpicture}
\node[label=below:2] at (0,0) [circle,fill,inner sep=3.5pt]{};
\node[label=below:5] at (1,0) [circle,fill,inner sep=3.5pt]{};
\node[label=below:1] at (2,.5) [circle,fill,inner sep=1.5pt]{};
\node[label=below:7] at (3,1.5) [circle,fill,inner sep=1.5pt]{};
\node[label=below:9] at (4,3) [circle,fill,inner sep=3.5pt]{};
\node[label=below:4] at (7,0) [circle,fill,inner sep=1.5pt]{};
\node[label=below:11] at (8,0) [circle,fill,inner sep=3.5pt]{};
\node[label=below:10] at (9,.5) [circle,fill,inner sep=1.5pt]{};
\node[label=below:3] at (10,1.5) [circle,fill,inner sep=3.5pt]{};
\node[label=below:12] at (11,3) [circle,fill,inner sep=3.5pt]{};
\node[label=above:8] at (5.5,2) [circle,fill,inner sep=3.5pt]{};
\node[label=below:6] at (5.5,1) [circle,fill,inner sep=3.5pt]{};
\draw (5.5,2) -- (5.5,1);
\draw (5.5,2) -- (0,0);
\draw (5.5,2) -- (1,0);
\draw (5.5,2) -- (2,.5);
\draw (5.5,2) -- (3,1.5);
\draw (5.5,2) -- (4,3);
\draw (5.5,2) -- (7,0);
\draw (5.5,2) -- (8,0);
\draw (5.5,2) -- (9,.5);
\draw (5.5,2) -- (10,1.5);
\draw (5.5,2) -- (11,3);
\draw (5.5,1) -- (0,0);
\draw (5.5,1) -- (1,0);
\draw (5.5,1) -- (2,.5);
\draw (5.5,1) -- (3,1.5);
\draw (5.5,1) -- (4,3);
\draw (5.5,1) -- (7,0);
\draw (5.5,1) -- (8,0);
\draw (5.5,1) -- (9,.5);
\draw (5.5,1) -- (10,1.5);
\draw (5.5,1) -- (11,3);
\draw (1,0) -- (0,0);
\draw (4,3) -- (0,0);
\draw (4,3) -- (1,0);
\draw (4,3) -- (2,.5);
\draw (4,3) -- (3,1.5);
\draw (10,1.5) -- (7,0);
\draw (10,1.5) -- (8,0);
\draw (10,1.5) -- (9,.5);
\draw (11,3) -- (7,0);
\draw (11,3) -- (8,0);
\draw (11,3) -- (9,.5);
\draw (11,3) -- (10,1.5);
\draw (7,0) -- (8,0);
\end{tikzpicture}
\end{center}
\begin{center}
\begin{tikzpicture}
\node at (0,0) [circle,fill,inner sep=1.5pt]{};
\node at (.75,0) {$\{6,8\}$};
\draw (0,0) -- (-1,-1);
\draw (0,0) -- (1,-1);
\node at (-1,-1) [circle,fill,inner sep=1.5pt]{};
\node at (-.5,-1) {$\{9\}$};
\node at (1,-1) [circle,fill,inner sep=1.5pt]{};
\node at (1.75,-1) {$\{3,12\}$};
\node at (-1,-2) [circle,fill,inner sep=1.5pt]{};
\node at (-.25,-2) {$\{1,7\}$};
\node at (-1,-3) [circle,fill,inner sep=1.5pt]{};
\node at (-.25,-3) {$\{2,5\}$};
\node at (1,-2) [circle,fill,inner sep=1.5pt]{};
\node at (1.55,-2) {$\{10\}$};
\node at (1,-3) [circle,fill,inner sep=1.5pt]{};
\node at (1.55,-3) {$\{11\}$};
\node at (1,-4) [circle,fill,inner sep=1.5pt]{};
\node at (1.45,-4) {$\{4\}$};
\draw (-1,-1) -- (-1,-3);
\draw (1,-1) -- (1,-4);
\end{tikzpicture}
\end{center}
\caption{A labelled quasi-loop-threshold graph with looped dominating vertex (larger nodes are looped, smaller nodes are unlooped), and its associated rooted partition tree.}
\label{fig-qltconn}
\end{figure}

We now turn to \eqref{qltn1'}. The right-hand side of \eqref{qltn1'} counts partition trees on $[n+1]$ --- labelled (but not rooted) trees whose vertices form a partition of $[n+1]$. Thus our goal now is to describe a bijective correspondence from labelled quasi-threshold graphs on vertex set $[n]$ to partition trees on $[n+1]$.   

So, let a labelled quasi-loop-threshold graph $G$ on vertex set $[n]$ be given. There are three possibilities. 
\begin{enumerate}[(a)]
\item If $G$ has some isolated vertices, let $B$ be the set of isolated vertices, and let $C_1, \ldots, C_\ell$ be the components of $G$ that have looped dominating vertices. We construct an associated tree $T(G)$ as follows. The tree $T(G)$ has a vertex with label $\{n+1\}$. Vertex $\{n+1\}$ has one neighbour, whose label is $B$. Vertex $B$ is also joined to the root of the tree $T'(C_i)$, for each $i$, where $T'(C_i)$ (whose vertices form a partition of the vertex set of $T_i$) is the rooted tree constructed in proof of Claim \ref{clm-rpt}. (See Figure \ref{fig-qltgen-A}.) Note that no special property of the set $[n]$ was used in the proof of Claim \ref{clm-rpt}, so we could have replaced $[n]$ there with an arbitrary finite set.
\begin{figure}[ht!] 
\begin{center}
\begin{tikzpicture}
\node[label=above:1] at (0,0) [circle,fill,inner sep=3.5pt]{};
\node[label=above:5] at (1,0) [circle,fill,inner sep=3.5pt]{};
\node[label=above:2] at (2,0) [circle,fill,inner sep=1.5pt]{};
\node[label=above:3] at (3,0) [circle,fill,inner sep=1.5pt]{};
\node[label=above:4] at (4,0) [circle,fill,inner sep=3.5pt]{};
\draw (0,0) -- (1,0);
\node at (7,0) [circle,fill,inner sep=1.5pt]{};
\node at (6.5,0) {$\{6\}$};
\node at (8,0) [circle,fill,inner sep=1.5pt]{};
\node at (8.75,0) {$\{2,3\}$};
\node at (9,1) [circle,fill,inner sep=1.5pt]{};
\node at (9,1.4) {$\{1,5\}$};
\node at (9,-1) [circle,fill,inner sep=1.5pt]{};
\node at (9,-1.4) {$\{4\}$};
\draw (7,0) -- (8,0);
\draw (8,0) -- (9,1);
\draw (8,0) -- (9,-1);
\end{tikzpicture}
\end{center}
\caption{A labelled quasi-loop-threshold graph of type (a) (left), and its associated partition tree. Here and in Figures \ref{fig-qltgen-B} and \ref{fig-qltgen-C}, larger nodes indicate looped vertices.}
\label{fig-qltgen-A}
\end{figure}
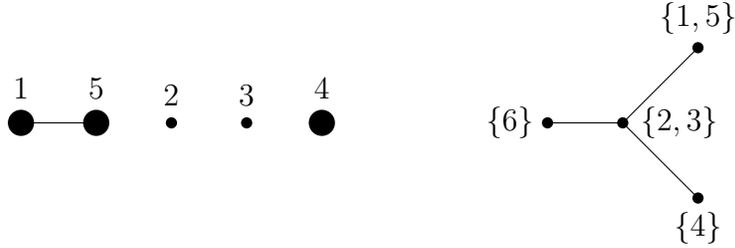
\item If $G$ has no isolated vertices, and also has no looped dominating vertex, then $G$ is the union of components $C_1, \ldots, C_\ell$ ($\ell \geq 2$) each of which has a looped dominating vertices. We construct $T(G)$ as follows. The tree $T(G)$ has a vertex with label $\{n+1\}$. Vertex $\{n+1\}$ is joined to the root of the tree $T'(C_i)$ (from Claim \ref{clm-rpt}), for each $i$. (See Figure \ref{fig-qltgen-B}.)
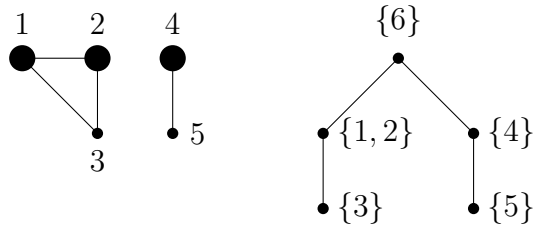
\begin{figure}[ht!] 
\begin{center}
\begin{tikzpicture}
\node[label=above:1] at (0,0) [circle,fill,inner sep=3.5pt]{};
\node[label=above:2] at (1,0) [circle,fill,inner sep=3.5pt]{};
\node[label=below:3] at (1,-1) [circle,fill,inner sep=1.5pt]{};
\node[label=above:4] at (2,0) [circle,fill,inner sep=3.5pt]{};
\node[label=right:5] at (2,-1) [circle,fill,inner sep=1.5pt]{};
\draw (0,0) -- (1,0);
\draw (0,0) -- (1,-1);
\draw (1,0) -- (1,-1);
\draw (2,0) -- (2,-1);
\node at (5,0) [circle,fill,inner sep=1.5pt]{};
\node at (5,.5) {$\{6\}$};
\node at (4,-1) [circle,fill,inner sep=1.5pt]{};
\node at (4.7,-1) {$\{1,2\}$};
\node at (6,-1) [circle,fill,inner sep=1.5pt]{};
\node at (6.5,-1) {$\{4\}$};
\node at (4,-2) [circle,fill,inner sep=1.5pt]{};
\node at (4.5,-2) {$\{3\}$};
\node at (6,-2) [circle,fill,inner sep=1.5pt]{};
\node at (6.5,-2) {$\{5\}$};
\draw (5,0) -- (4,-1);
\draw (5,0) -- (6,-1);
\draw (4,-1) -- (4,-2);
\draw (6,-1) -- (6,-2);
\end{tikzpicture}
\end{center}
\caption{A labelled quasi-loop-threshold graph of type (b) (left), and its associated partition tree.}
\label{fig-qltgen-B}
\end{figure}
\item Finally, if $G$ has some looped dominating vertices, then we construct $T(G)$ as follows. Add $n+1$ to the set that labels the root of $T'(G)$ (from Claim \ref{clm-rpt}), and view the resulting tree (whose vertex set is a partition of $[n+1]$) as an unrooted tree. (See Figure \ref{fig-qltgen-C}.) 
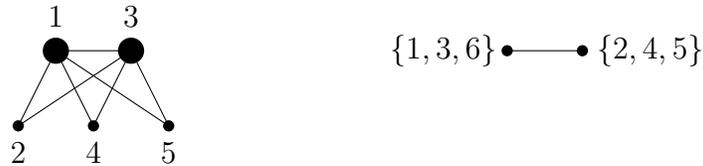
\begin{figure}[ht!] 
\begin{center}
\begin{tikzpicture}
\node[label=above:1] at (0,0) [circle,fill,inner sep=3.5pt]{};
\node[label=above:3] at (1,0) [circle,fill,inner sep=3.5pt]{};
\node[label=below:2] at (-.5,-1) [circle,fill,inner sep=1.5pt]{};
\node[label=below:4] at (.5,-1) [circle,fill,inner sep=1.5pt]{};
\node[label=below:5] at (1.5,-1) [circle,fill,inner sep=1.5pt]{};
\draw (0,0) -- (1,0);
\draw (0,0) -- (-.5,-1);
\draw (0,0) -- (.5,-1);
\draw (0,0) -- (1.5,-1);
\draw (1,0) -- (-.5,-1);
\draw (1,0) -- (.5,-1);
\draw (1,0) -- (1.5,-1);
\node at (6,0) [circle,fill,inner sep=1.5pt]{};
\node at (5.15,0) {$\{1,3,6\}$};
\node at (7,0) [circle,fill,inner sep=1.5pt]{};
\node at (7.9,0) {$\{2,4,5\}$};
\draw (6,0) -- (7,0);
\end{tikzpicture}
\end{center}
\caption{A labelled quasi-loop-threshold graph of type (c) (left), and its associated partition tree.}
\label{fig-qltgen-C}
\end{figure}
\end{enumerate}
This process associates a partition tree on $[n+1]$ to each labelled quasi-loop-threshold graph on vertex set $[n]$, and the association is invertible. This completes the verification of \eqref{qltn1'}.

\section*{OEIS sequences referenced}

This paper concerns the following sequences from OEIS:
\begin{itemize}
\item \seqnum{A000629}, \seqnum{A000670}, \seqnum{A005840}, \seqnum{A008277}, \seqnum{A008292}, \seqnum{A038052}, \seqnum{A048802}, \seqnum{A053525}, \seqnum{A058863}, \seqnum{A058864}, \seqnum{A154921}, \seqnum{A317057}, \seqnum{A348436}, \seqnum{A348576}, \seqnum{A350060}, \seqnum{A350528}, \seqnum{A350531}, \seqnum{A350745} and \seqnum{A350746}.
\end{itemize}

\section*{Keywords and MSC}

{\it 2010 Mathematics Subject Classification}: Primary 05C30. Secondary 05A18, 05A19.

\noindent {\it Keywords}: threshold graph, quasi-threshold graph, loop-threshold graph, quasi-loop-threshold graph, Stirling number of the second kind, Eulerian number, set partition.

\end{document}